\theoremstyle{plain}
\newtheorem{thm}{Theorem}[subsection]
\newtheorem{cor}[thm]{Corollary}
\newtheorem{lem}[thm]{Lemma}
\newtheorem{prop}[thm]{Proposition}
\theoremstyle{definition}
\newtheorem{defn}[thm]{Definition}
\newtheorem{rem}[thm]{Remark}
\newtheorem{rems}[thm]{Remarks}
\newcommand{\B}{\mathcal{B}}
\newcommand{\dive}{\operatorname{div}}
\newcommand{\0}{\bar 0}
\newcommand{\1}{\bar 1}
\numberwithin{equation}{subsection}
\def\Z{{\mathbb Z}}
\def\al{\alpha}
\def\N{\mathbb N}
\def\bbc{\mathbb C}
\def\:{\colon}
\def\a{\alpha}
\newcommand{\fg}{\mathfrak{g}}
\def \fg{\mathfrak{g}}
\def \fh{\mathfrak{h}}
\def \fh{\mathfrak{h}}
\def\La{\mathfrak{g}}
\def\Lsa{\mathfrak{h}}
\def\C{{\mathbb C}}
\def\Z{{\mathbb Z}}
\def\bbz{{\mathbb Z}}
\def\N{{\mathbb N}}
\def\bu{\textbf{U}}
\def\f{\mathcal{F}}
\newcommand{\supp}{\operatorname{supp}}
\newcommand{\lie}[1]{\mathfrak{#1}}\def\span{\textnormal{span}}
\def\m{\mathcal{M}}
\def\bb{\textbf{B}}
\def\cs{\mathcal{S}}
\def\cp{\mathcal{P}}
\def\bu{\textbf{U}}
\begin{document}
\normalsize

\title[Integral Bases for the Universal Enveloping Superalgebras of Map Superalgebras II]
{Integral Bases for the Universal Enveloping Superalgebras of Map Superalgebras II}

\author{Irfan Bagci}
\address{Department of Mathematics \\
University of North Georgia \\
Oakwood, GA 30566}
\email{bagcimath@gmail.com}

\author{Samuel Chamberlin}
\address{Computer Science and Mathematics Department\\
Park University\\
Parkville, MO 64152.}
\email{samuel.chamberlin@park.edu}

\begin{abstract}
Let $\fg$ be a finite dimensional complex simple  Lie superalgebra of Cartan type  and $A$ be a commutative, associative algebra with unity over $\bbc$. In this paper, following \cite{BC},  we define an integral form for the universal enveloping superalgebra of the map superalgebra $\fg\otimes A$, and exhibit an explicit integral basis for this integral form.
\end{abstract}
\maketitle

\section{Introduction}

Integral bases for various classes of Lie algebras and universal enveloping algebras were formulated by Chevalley, Cartier, Kostant, Garland and Mitzman, \cite{Kos, Gar, Mit}.
Integral bases for quantized universal enveloping algebras of various Lie algebras were discovered by Lusztig, Beck, Chari and Pressley, \cite{Lus, Bec, Cha}.
Integral bases for the universal enveloping algebras map algebras of the simple complex Lie algebras were given in \cite{C}.

In 1977, V. Kac provided a complete classification of simple Lie superalgebras, \cite{Kac}. The simple finite-dimensional Lie superalgebras are divided into two types based on their even part: they are either classical or of Cartan type. Integral bases for the universal enveloping algebras map algebras of the classical Lie superalgebras were given in \cite{BC}. Integral bases for the universal enveloping algebras of Cartan type Lie superalgebras were given in \cite{Gav}.

In this work we finish the construction of integral bases for the universal enveloping algebras of map superalgebras started in \cite{BC} by formulating these bases for map superalgebras of Cartan type Lie superalgebras. This is done via straightening identities in the universal enveloping superalgebras. The root systems of Cartan type Lie superalgebras have properties which make them quite different from those of the simple classical Lie superalgebras. In particular, not every root has multiplicity one and that the negative of a root is not always a root. These differences require a more complicated Chevalley type basis. Due to the difference in the Chevalley type basis, the straightening identities, which are required to prove the existence of a suitable integral basis, are more difficult in the Cartan type Lie superalgebra setting.

This paper is organized as follows:  In Section 2 we fix some notation and briefly review basic facts about simple Lie superalgebras of Cartan type, introduce map superalgebras and record the properties we are going to need in the rest of the paper. Then in Section 3 we state the main theorem of the paper and give some important corollaries. Next in Section 4 we state and prove all of the necessary straightening identities. In Section 5 we prove the main result of the paper and give a triangular decomposition of our integral form.

\section{ Preliminaries}

\subsection{}\label{defdeg}

The following notation will be used throughout this manuscript: $\C$ is the set of complex numbers, $\Z_{\geq0}$ is the set of non-negative integers, and $\N$ is the set of positive integers. All vector spaces and algebras we consider will be over the ground field $\C$. A Lie superalgebra is a finite dimensional $\Z_2$-graded vector space $\fg=\fg_{\0}\oplus\fg_{{\1}}$ with a bracket $[,]:\fg\otimes\fg\rightarrow\fg$ which preserves the $\Z_2$-grading and satisfies graded versions of the operations used to define Lie algebras. The even part $\fg_{\0}$ is a Lie algebra under the bracket.

Given any Lie superalgebra $\fg$ let $\bu(\fg)$ be the universal enveloping superalgebra of $\fg$.  $\bu(\fg)$ admits a PBW type basis and if   $x_1, \cdots, x_m$ is  a basis of $\fg_{\0}$ and $y_1, \dots, y_n$ is a basis of $\fg_{\1}$, then the elements
$$x_1^{i_1} \dots x_m^{i_m}y_1^{j_1}\dots y_n^{j_n}\ \  \text{with} \ \ i_1, \dots, i_m \geq 0  \ \ \text{and} \ \ j_1, \dots, j_n \in \{0, 1\}$$
form a basis of the universal enveloping superalgebra $\bu(\fg)$. Given $u\in\bu(\fg)$ and $r\in\Z_{\geq0}$ define
$$u^{(r)}:=\frac{u^r}{r!}\textnormal{ and }\binom{u}{r}:=\frac{u(u-1)\cdots(u-r+1)}{r!}.$$
Define $T^0(\fg):=\C$, and for all $j\geq1$, define $T^j(\fg):=\fg^{\otimes j}$, $T(\fg):=\bigoplus_{j=0}^\infty T^j(\fg)$, and $T_j(\fg):=\bigoplus_{k=0}^jT^k(\fg)$. Then set $\bu_j(\fg)$ to be the image of $T_j(\fg))$ under the canonical surjection $T(\fg)\to\bu(\fg)$. Then for any $u\in\bu(\fg)$ \emph{define the degree of $u$} by $$\deg u:=\min_{j}\{u\in\bu_j(\fg)\}$$

\subsection{}

Finite-dimensional complex simple Lie superalgebras were classied by Kac \cite{Kac}.  The finite-dimensional simple Lie superalgebras are divided into two classes: the classical (when the even part is a reductive Lie algebra)
and the Cartan ones (otherwise). Lie superalgebras of Cartan type consists of four infinite families $$W(n), \ n\geq 2; \ S(n), \ n\geq 3; \  \tilde{S}(n), \ n\geq 4  \  \text{ and even}; \  H(n), \ n \geq 4.$$

 We briefly define Cartan type Lie superalgebras. Assume that $n\geq 2$.  Let $\Lambda(n)$ denote the exterior algebra on $n$ odd generators $\xi_1, \dots, \xi_n$; $\Lambda(n)$ is a $2^n$-dimensional associative algebra we assign to it a $\Z$-grading by setting deg  $\xi_i = 1$ for $1\leq i\leq n$. The $\Z_2$-grading is inherited from the $\Z$-grading by setting $\Lambda(n)_{\0} = \bigoplus_{k} \Lambda^{2k}(n)$ and $\Lambda(n)_{\1} = \bigoplus_{k} \Lambda^{2k+1}(n)$.

Let $p(x)$ denote the parity of a homogeneous  element $x$ in a $\Z_2$-graded vector space. A (homogeneous) \emph{superderivation} of $\Lambda(n)$ is a linear map $D: \Lambda(n) \to \Lambda(n)$ which satisfies $D(xy)=D(x)y + (-1)^{p( D) \; p(x)}xD(y)$ for all homogenous $x,y \in \Lambda(n)$. 
 Then $W(n)$ is the Lie superalgebra of super derivations of $\Lambda(n)$. The bracket on  $W(n)$ is the supercommutator bracket. That is,
$$[x,y]=xy-(-1)^{ p(x) p( y)}yx$$
for homogeneous $x,y$ and extended to all $W(n)$ by linearity.  The $\Z$-grading on $\Lambda(n)$ induces a $\Z$-grading on $W(n)$
$$W(n)= W(n)_{-1} \oplus W(n)_0 \oplus \dots \oplus W(n)_{n-1},$$
where $W(n)_k$ consists of derivations that increase the degree of a homogeneous element by $k$. 

Denote by  $\partial_i$ the derivation of $\Lambda(n)$ defined by
$$\partial_i(\xi_j): = \delta_{i,j}.$$ Then any element $D$ of $W(n)$ can be written in the form
$$\sum_{i=1}^nf_i\partial_i,$$
where $f_i \in \Lambda(n)$.

The superalgebra $S(n)$ is the subalgebra of $W(n)$ consisting of all elements $D \in W(n)$ such that
$\dive(D) = 0$, where
$$\dive\left(\sum_{i=1}^n f_i \partial_i\right): = \sum_{i=1}^n \partial_i(f_i).$$
The superalgebra $S(n)$ has a $\Z$-grading induced by the grading of $W(n)$
$$S(n)= S(n)_{-1} \oplus S(n)_0 \oplus \dots \oplus S(n)_{n-2}.$$

The simple Lie superalgebra $\tilde{S}(n)$ is defined only when $n$ is even. The superalgebra $\tilde{S}(n)$ has a $\Z$-grading
$$\tilde{S}(n)= \tilde{S}(n)_{-1} \oplus \tilde{S}(n)_0 \oplus \dots \oplus \tilde{S}(n)_{n-2}.$$
For each $r$ with $0\leq r \leq n-2, \tilde{S}_r(n)= S_r(n)$ . The difference is that $\tilde{S}(n)_{-1}$ has a basis consisting of $\xi_1 \dots \xi_n \partial_i$.
This grading is not an algebra grading.

The subspace of $W(n)$ spanned by all super derivations of the form
$$D_f:=\Sigma _{i\in \N}\partial _i(f)\partial _i$$
where $f\in \Lambda(n)$, is a Lie superalgebra called $\tilde{H}(n)$. It inherits a natural $\Z$-grading from $W(n)$ and we have
$$\tilde{H}(n)=\tilde{H}(n)_{-1} \oplus \tilde{H}(n)_0 \oplus \dots \oplus \tilde{H}(n)_{n-2} .$$
The subalgebra $$H(n)=[\tilde{H}(n), \tilde{H}(n)]=H(n)_{-1} \oplus H(n)_0 \oplus \dots \oplus H(n)_{n-3}$$ is a simple Lie superalgebra of Cartan type.

Throughout this work $\fg$ will be a Cartan type Lie superalgebra. The crucial difference between the Cartan type superalgebras and the classical superalgebras
is that the $\fg_{\0}$ component is no longer reductive.  However, as was described above, the Cartan type Lie superalgebras  admit a ${\mathbb Z}$-grading: $\fg=\bigoplus_{k\in {\mathbb Z}} \fg_{k}.$  The grading is compatible with the $\Z_{2}$-grading in the sense that $\bigoplus_{k}\fg_{2k}=\fg_{\0}$ and $\bigoplus_{k}\fg_{2k+1}=\fg_{\1}$.

\subsection{Cartan Subalgebras and the Root Structure}
Cartan subalgebras $\fh$ of $\fg$ coincide with Cartan subalgebras of $\fg_{0}$.  We fix a distinguished Cartan subalgebra $\fh$ of $\fg_{0}$ so that $\fh$ has the following bases respectively.
\begin{eqnarray*}
\left\{\xi_k\partial_k\ |\ k\in\{1,\ldots,n\}\right\}&\textnormal{ if }&\fg=W(n) \\
\left\{\xi_k\partial_k-\xi_{k+1}\partial_{k+1}\ |\ k\in\left\{1,\ldots,n-1\right\}\right\}&\textnormal{ if }&\fg=S(n)\textnormal{ or }\tilde{S}(n)\\
\left\{\xi_k\partial_k-\xi_{\left[\frac{n}{2}\right]+k}\partial_{\left[\frac{n}{2}\right]+k}\ |\ k\in\left\{1,\ldots,\left[\frac{n}{2}\right]\right\}\right\}&\textnormal{ if }&\fg=H(n)
\end{eqnarray*}

If $\fg=W(n)$ define $\mathcal{E} \in\fg_0$ by
$$\mathcal{E}:=\sum_{i=1}^n\xi_i\partial_i$$
Then define $\overline{\fh}$, $\overline{\fg}_0$, and $\overline{\fg}$ as follows:
\begin{eqnarray*}
\overline{\fh}&:=&\fh\textnormal{ if } \fg=W(n)\textnormal{ or }\tilde{S}(n)  \ \ \textnormal{and}   \ \  \ \ \overline{\fh}:=\fh+\C\mathcal{E}\textnormal{ if } \fg=S(n)\textnormal{ or }H(n)\\
\overline{\fg}_0&:=&\fg_0\textnormal{ if } \fg=W(n)\textnormal{ or }\tilde{S}(n) \ \ \textnormal{and}   \ \ \overline{\fg}_0:=\fg_0+\C\mathcal{E}\textnormal{ if } \fg=S(n)\textnormal{ or }H(n)\\
\overline{\fg}&:=&\fg\textnormal{ if } \fg=W(n)\textnormal{ or }\tilde{S}(n)  \ \ \textnormal{and}   \ \ \overline{\fg}:=\fg+\C\mathcal{E}\textnormal{ if } \fg=S(n)\textnormal{ or }H(n)
\end{eqnarray*}

Then we have a root decomposition
$$\fg=\bigoplus_{\alpha \in \overline{\fh}^{\ast}}\fg_{\alpha},$$
where $\fg_{\alpha}:=\{x\in\fg\mid [h,x] = \alpha(h)x\ \text{for all }\ h \in \overline{ \fh}\}$. The set $R:=\{\alpha\in \overline{\fh}^\ast-\{0\} \mid \fg_{\alpha} \neq \{0\}\}$ is called the root system. The $\Z_2$-grading of $\fg$ determines a decomposition of $R$ into the disjoint union of the even roots $R_{\0}$  and the odd roots $R_{\1}$.

For each $\a\in R$ there is a unique integer $ht(\a)$ (called the height of $\a$) such that
$\La\subset\fg_{ht(\a)}$. This allows us to use the $\Z$ grading of $\fg$ to determine a disjoint decomposition of $R$. For $z\in\Z$ we define $R_z:=\{\a\in R\ |\  ht(\a)=z\}$. Then
$$R=\bigcup_{z\in\Z}R_z$$
The $\Z_2$ and $\Z$ decompositions are compatible in that
$$R_{\0}=\bigcup_{z\in\Z}R_{2z}\ \textnormal{ and }\ R_{\1}=\bigcup_{z\in\Z}R_{2z+1}$$



Let us describe the roots. If $\fg = W(n)$ then  $\fg_0 \cong \mathfrak{gl}(n)$. We choose the standard basis $\varepsilon _1,\dotsc , \varepsilon _n$ of $\Lsa^\ast$ where $\varepsilon_{i}(\xi_{j}\partial_{j})=\delta_{i,j}$ for all $1 \leq i,j \leq n.$  Then the root system of $\fg$ is the set
$$R=\{\varepsilon _{i_1}+ \dotsb  +\varepsilon _{i_k}-\varepsilon _j\mid 1\leq i_1<\dotsb <i_k\leq n,\ 1\leq j\leq n\}.$$

If $\fg = S(n)$ then  $\fg_0 \cong \mathfrak{sl}(n)$. The root system of $S(n)$ is a subset of that of $W(n)$. The root system of $S(n)$ is obtained from the root system of $W(n)$ by removing the roots $\varepsilon _{1}+ \dotsb  +\varepsilon _{n}-\varepsilon _j $. Thus the  root system of $\fg$ is the set
$$R=\{\varepsilon _{i_1}+ \dotsb  +\varepsilon _{i_k}-\varepsilon _j\mid 1\leq i_1<\dotsb <i_k\leq n,\ k<n, \ \ 1\leq j\leq n\}.$$

If $\fg = \tilde{S}(n)$ then  $\fg_0 \cong \mathfrak{sl}(n)$ and the root system of $\fg$ is the same as that of $S(n)$ except in this case we have $\varepsilon _1+\dotsc+\varepsilon_n=0.$

Finally if  $\fg = H(n)$ then  $\fg_0 \cong \mathfrak{so}(n)$. Let $\{\varepsilon _1,\dotsc , \varepsilon _{\left[\frac{n}{2}\right]}\}$ be the standard basis in the weight space of $\mathfrak{so}(n)$ and $\delta$ be dual to $\mathcal{E}$. If $n$ is even, then the  root system of $\fg$ is the set
$$R=\{\pm \varepsilon _{i_1}\pm \dotsb  \pm \varepsilon _{i_k}+ m\delta \mid 1\leq i_1<\dotsb <i_k\leq n/2,\ k-2 \leq m \leq n-2, \ k-m \in 2\mathbb{Z}\}.$$
If $n$ is odd, then the  root system of $\fg$ is the set
$$R=\{\pm \varepsilon _{i_1}\pm \dotsb  \pm \varepsilon _{i_k}+ m\delta \mid 1\leq i_1<\dotsb <i_k\leq n/2,\ k-2 \leq m \leq n-2\}.$$

Many properties of root decompositions for semisimple Lie algebras do not hold in our case. For example, a root may have multiplicity bigger than one. Also, $\a \in R$ does not imply that $-\a\in R$. In the following proposition we record some properties of the roots; see also \cite{Gav,Kac,Sch,Ser}.

\begin{prop}\label{rootprop} \cite{Gav, Ser}
Let $\fg$ be a Cartan type  Lie superalgebra and let  $\fg= \oplus_{\alpha \in \overline{\fh}^{\ast}}\fg_{\alpha}$ be its root decomposition relative to $\fh$.
\begin{itemize}
\item[(1)]If  both $\alpha \in R$ and  $-\alpha \in R$, then $\fg_{\al}, \fg_{-\al}\subset \fg_0$ and $\dim \fg_{\alpha} =1$.
\item[(2)] $2 \alpha \in R$ if $\fg= H(n), \  \alpha \in \{m \delta \mid m \in \Z\}$ or $\fg =  \tilde{S}(n), \ \alpha \in  \{-\varepsilon_i \mid i = 1, \dots, n\}$.
\end{itemize}
\end{prop}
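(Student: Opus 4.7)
The plan is to verify both parts by a case-by-case inspection of the root systems described above, since the conclusions are combinatorial once the explicit descriptions are in hand; this is a classical result and can be extracted from \cite{Gav,Ser,Sch,Kac}.

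For part (1), in each of the four families $W(n), S(n), \tilde{S}(n), H(n)$ the strategy is first to enumerate those $\alpha \in R$ for which $-\alpha \in R$, and then for each such $\alpha$ to exhibit a spanning weight vector for $\fg_\alpha$ and check that it lies in $\fg_0$. For $W(n)$, matching the $\varepsilon$-coordinates of $\alpha = \varepsilon_{i_1} + \cdots + \varepsilon_{i_k} - \varepsilon_j$ and $-\alpha$ against the admissible shape of a root forces $\alpha = \varepsilon_i - \varepsilon_j$ with $i \neq j$, and $\fg_\alpha$ is then spanned by the degree-zero element $\xi_i \partial_j$. The cases $S(n)$ and $\tilde{S}(n)$ follow from the $W(n)$ analysis, with the identity $\varepsilon_1 + \cdots + \varepsilon_n = 0$ accounted for in $\tilde{S}(n)$. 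For $H(n)$, imposing the admissibility constraints $k - 2 \leq m \leq n - 2$ and $k - m \in 2\Z$ simultaneously on both $\alpha$ and $-\alpha$ forces $k \leq 2$ (and $m = 0$ when $k = 2$); the surviving possibilities are exactly the roots of $\fg_0 \cong \mathfrak{so}(n)$, whose root spaces are one-dimensional.

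For part (2), I would argue that $2\alpha \in R$ forces one of the two listed situations. For $W(n)$ and $S(n)$ every root has $\varepsilon$-coordinates in $\{-1, 0, 1\}$, so $2\alpha$ has some coordinate of absolute value $2$ and cannot be a root. For $\tilde{S}(n)$, checking against the relation $\sum_l \varepsilon_l = 0$ shows that the only doublings landing back in $R$ come from $\alpha = -\varepsilon_i$, in which case $-2\varepsilon_i = \sum_{l \neq i} \varepsilon_l - \varepsilon_i$ is visibly a root with $k = n - 1$ and $j = i$. For $H(n)$, only roots with $k = 0$, i.e.\ $\alpha = m\delta$, survive doubling with $\varepsilon$-coefficients in $\{-1, 0, 1\}$, and then $2\alpha = 2m\delta$ is again of the required shape.

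The main obstacle is the $H(n)$ portion of part (1), where the parameters $(k, m)$ must be tracked simultaneously against the compatibility conditions, and where the natural generators $D_f$ of $\tilde{H}(n)$ are not themselves weight vectors for a generic monomial $f$; writing down the actual one-dimensional weight spaces requires pairing up the odd variables according to the symplectic structure on $\Lambda(n)$ underlying the Cartan of $H(n)$.
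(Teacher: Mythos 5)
The paper itself offers no proof of this proposition --- it is simply quoted from \cite{Gav,Ser} --- so what has to be assessed is whether your case-by-case verification is sound, and it is not: the coordinate-matching step in part (1) is precisely where the delicate cases hide. For $W(n)$ the description $\alpha=\varepsilon_{i_1}+\cdots+\varepsilon_{i_k}-\varepsilon_j$ allows $j\in\{i_1,\ldots,i_k\}$ (cancellation) as well as $k=0$, and consequently $\pm\varepsilon_j$ are \emph{both} roots: $-\varepsilon_j$ is the weight of $\partial_j\in W(n)_{-1}$, while $\varepsilon_j=\varepsilon_j+\varepsilon_l-\varepsilon_l$ is the weight of $\xi_j\xi_l\partial_l\in W(n)_1$ for every $l\neq j$. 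So ``$\alpha,-\alpha\in R$'' does not force $\alpha=\varepsilon_i-\varepsilon_j$; worse, for this pair $\fg_{-\varepsilon_j}=\C\partial_j\not\subset\fg_0$, $\fg_{\varepsilon_j}\subset W(n)_1$, and $\dim\fg_{\varepsilon_j}=n-1$, so these pairs are not merely omitted from your analysis --- they show that part (1), read literally (with $\fg_0$ the degree-zero part, or even with $\fg_{\0}$), cannot be established by the argument you outline and in fact needs to be reinterpreted. The same phenomenon occurs in the other families: for $H(n)$ your constraint bookkeeping stops at ``$k\le 2$, and $m=0$ when $k=2$,'' but $k=1,\ m=\pm1$ and $k=0,\ m=\pm2$ also satisfy $k-2\le m\le n-2$ and $k-m\in2\Z$, producing pairs such as $\pm(\varepsilon_i+\delta)$ that pass the constraints and do not lie over $\fg_0$. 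What the rest of the paper actually uses is the correct, weaker fact that height-zero roots ($\alpha\in R_0$) have one-dimensional root spaces $\fg_{\pm\alpha}\subset\fg_0$ (the root decomposition of $\fg_0\cong\gl(n),\ \sl(n),\ \mathfrak{so}(n)$), together with part (2); a repaired write-up should either prove that statement or explicitly enumerate and address the extra $\pm\alpha$ pairs above.

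Part (2) of your sketch is closer but still incomplete. Read as ``only if'' (which is how the paper uses it, in Remark \ref{bborder} and identity \eqref{2gam}), your coefficient argument does dispose of $W(n)$ and $S(n)$; but for $\tilde S(n)$ you assert, rather than verify, that modulo the relation $\varepsilon_1+\cdots+\varepsilon_n=0$ the only doublings landing in $R$ come from $\alpha=-\varepsilon_i$ --- since every weight has infinitely many representatives modulo $\Z(\varepsilon_1+\cdots+\varepsilon_n)$, this requires an actual computation in the quotient lattice, not just inspection of one representative. Read as ``if,'' your closing sentence for $H(n)$ (``$2\alpha=2m\delta$ is again of the required shape'') ignores the height bounds: one still needs $2m$ within the admissible range, and for instance $\alpha=2\delta$ can be a root while $4\delta$ is not. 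So both the statement and your argument should be phrased as determining exactly which $\gamma\in R$ have $2\gamma\in R$, with the range conditions made explicit.
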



As in \cite{Gav} and \cite{Ser} fix a distinguished simple root system for $\fg$, $\Delta:=\{\alpha_1,\ldots,\alpha_l\}$. Denote by  $R^+$, and $R^-$,  positive roots, negative roots respectively. Define $\fg^\pm:=\displaystyle\bigoplus_{\a\in R^\pm}\fg_{\a}$. Then we have the lopsided triangular decomposition $\fg=\fg^-\oplus\fh\oplus\fg^+$. For $j\in\{\0,\1\}$ define $R_j^\pm:=R^\pm\cap R_j$. Define $I:=\{1,\ldots,l\}$.

\subsection{Chevalley Bases}

In this subsection we define the notion of Chevalley bases for the Cartan type Lie superalgebras. The basis given here is identical to that in \cite{Gav}.

Define the multiplicity function $\mu:R\to\Z_{\geq0}$ by $\mu(\alpha):=\dim\fg_\alpha$ and for each $z\in\Z_{\geq0}$ define
$$[z]:=\left\{\begin{array}{cc}
\{1,\ldots,z\}\textnormal{ if }z\geq1\\
\varnothing\textnormal{ if }z=0
\end{array}\right.$$

\begin{defn}
A \emph{Chevalley basis} of $\fg$ is any $\C$-basis $B=\{h_i\ |\ i\in I\}\cup\{x_{\alpha,k}\ |\ \alpha\in R,\ k\in[\mu(\alpha)]\}$ for $\fg$ which is homogeneous for the cyclic grading of $\fg$ and has the following properties:
\begin{enumerate}
\item $\{h_i\ |\ i\in I\}$ is a $\C$-basis for $\fh$, such that $\alpha(h_i)\in\Z$ for all $\a\in R$ and all $i\in I$, $h_{\beta}\in
\span_{\Z}\{h_i\ |\ i\in I\}$ for all $\beta\in R_0$, and $[h_i,h_j]=0$ for all $i,j\in I$;
\item For all $\a\in R$, $\{x_{\a,k}\ |\ k\in[\mu(\alpha)]\}$ is a $\C$-basis for $\fg_{\a}$, and $[h_i,x_{\a,k}]=\a(h_i)x_{\a,k}$ for all $i\in I$, $\a\in R$, and $k\in[\mu(\alpha)]$;
\item $[x_{\a,k},x_{\a,k}]=0$ for all $\a\in R_{\0}$ and $k\in[\mu(\alpha)]$;
\item $[x_{\a,1},x_{-\a,1}]=h_{\a}$ (as defined in \cite{Ser}) for all $\a\in R_0$, and, for each $\gamma\in R_{-1}$, there exist injections $\sigma_\gamma:[\mu(-\gamma)]\hookrightarrow I$ such that $\left\{\pm h_{\sigma_{\gamma(k)}}\ |\ \gamma\in R_{-1},\ k\in[\mu(-\gamma)]\right\}=\{\pm h_i\ |\ i\in I\}$ and $[x_{\a,1},x_{-\gamma,k}]=\pm h_{\sigma_{\gamma(k)}}$;
\item $[x_{\a,k},x_{\beta,m}]=0$ for all $\a,\beta\in R$, $k\in[\mu(\a)]$, $m\in[\mu(\beta)]$ with $\a+\beta\notin R\cup\{0\}$;
\item Given $\a,\beta\in R$ with $\a+\beta\in R$, $k\in[\mu(\a)]$, and $m\in[\mu(\beta)]$ there exist $c_{\a,k}^{\beta,m}:[\mu(\a+\beta)]\to\{0,\pm1,\pm2\}$ such that $\displaystyle\left[x_{\a,k},x_{\beta,m}\right]=\sum_{j=1}^{\mu(\a+\beta)}c_{\a,k}^{\beta,m}(j)x_{\a+\beta,j}$, and the following hold:
\begin{enumerate}
\item If $\a,\beta\in R_0$, then $c_{\a,1}^{\beta,1}(1)=\pm(r+1)$ where $r\in \N$ is such that $\beta-r\a\in R$ and $\beta-(r+1)\a\notin R$ (note that in this case $\mu(\a)=\mu(\beta)=\mu(\a+\beta)=1$),
\item If $\a$ or $\beta$ is in $R_{-1}$, then there is one and only one $j^\prime\in[\mu(\a+\beta)]$ such that $c_{\a,k}^{\beta,m}(j^\prime)=\pm1$ and for all other $j\in[\mu(\a+\beta)]$ $c_{\a,k}^{\beta,m}(j)=0$,
\item If $\a=\beta$, there is at most one $j^\prime\in[\mu(\a+\beta)]$ such that $c_{\a,k}^{\beta,m}(j^\prime)\neq0$, and if such a $j^\prime$ exists then $c_{\a,k}^{\beta,m}(j^\prime)=\pm2$;
\end{enumerate}
\item If $\a\in R_0$ and $2\a+\beta\in R$, there exists a unique $k\in[\mu(2\a+\beta)]$ such that $[x_{\a,1},[x_{\a,1},x_{\beta,m}]]=\pm2x_{2\a+\beta,k}$ for all $m\in[\mu(\beta)]$
\end{enumerate}
\end{defn}

Fix a Chevalley basis for $\fg$,
$$B:=\{h_i\ | \ i\in I\}\cup\{x_{\a,k}\ |\ \a\in R,\ k\in[\mu(\a)]\}$$

\subsection{Map superalgebra and Notation}

Fix a commutative associative unitary algebra $A$ over $\C$. $\bb$ will denote a fixed $\C$--basis of $A$. The \emph{map superalgebra} of $\lie{g}$ is the $\Z_2$-graded vector space  $\lie{g}\otimes A$, where   $(\lie{g}\otimes A)_0 := \lie{g}_0\otimes A $ and  $(\lie{g}\otimes A)_1:= \lie{g}_1\otimes A $, with bracket given by linearly extending the bracket
$$[z\otimes a, z'\otimes b]:=[z,z']_{\lie{g}}\otimes ab,\ z,z'\in\fg,\ a,b\in A,$$
where $ [ , ]_{\lie{g}}$  denotes the super commutator bracket. $\lie{g}$ can be embedded in this Lie superalgebra as $\lie{g}\otimes 1$, see also \cite{BC,Sav}.


\section{An integral form and integral basis}

If $\mathcal{A}$ is an algebra over the field $\C$ of complex numbers, define an integral form $\mathcal{A}_\bbz$ of $\mathcal{A}$ to be a $\bbz$-algebra such that $\mathcal{A}_\bbz\otimes_\bbz\C=\mathcal{A}$. An integral basis for $\mathcal{A}$ is a $\bbz$-basis for $\mathcal{A}_\bbz$. In this section we define our integral form and state its integral basis as a theorem. 

\subsection{Multisets and $p(\chi)$}

Given any set $S$ define a \emph{multiset of elements of $S$} to be a multiplicity function $\chi:S\to\Z_{\geq0}$. Define $\f(S):=\{\chi:S\to\Z_{\geq0}:|\supp\chi|<\infty\}$. For $\chi\in\f(S)$ define $|\chi|:=\sum_{s\in S}\chi(s)$. Notice that $\f(S)$ is an abelian monoid under function addition. Define a partial order on $\f(S)$ so that for
$\psi,\chi\in\f(S)$, $\psi\leq\chi$ if $\psi(s)\leq\chi(s)$ for all $s\in S$. Define $\f_k(S):=\{\chi\in\f(S):|\chi|=k\}$ and given $\chi\in\f(S)$ define $\f(S)(\chi):=\{\psi\in\f(S):\psi\leq\chi\}$ and $\f_k(S)(\chi):=\{\psi\in\f(S)(\chi):|\psi|=k\}$. In the case $S=A$ the $S$ will be omitted from the notation. So that $\f:=\f(A)$, $\f_k:=\f_k(A)$, $\f(\chi):=\f(A)(\chi)$ and $\f_k(\chi):=\f_k(A)(\chi)$.

If $\psi\in\f(\chi)$ we define $\chi-\psi$ by standard function subtraction. Also define functions $\pi:\f-\{0\}\to A$ by
$$\pi(\psi):=\prod_{a\in A}a^{\psi(a)}$$
and $\pi(0)=1$, and $\m:\f\to\Z$ by
$$\m(\psi):=\frac{|\psi|!}{\prod_{a\in A}\psi(a)!}$$
For all $\psi\in\f$, $\m(\psi)\in\Z$ because if $\supp\psi=\{a_1,\ldots,a_k\}$ then $\m(\psi)$ is the multinomial coefficient
$$\binom{|\psi|}{\psi(a_1),\ldots,\psi(a_k)}$$

For $s\in S$ define $\chi_s$ to be the characteristic function of the set $\{s\}$. Then for all $\chi\in\f(S)$
$$\chi:=\sum_{s\in S}\chi(s)\chi_s$$

Given $\alpha\in R$ and $S\subset A$ define $X_\alpha:\f(S)\to\bu(\fg\otimes A)$ by
$$X_\alpha(\chi):=\prod_{a\in\supp\chi}\left(x_\alpha\otimes a\right)^{(\chi(a))}$$


Given $\chi\in\f$ and $\a\in R$, recursively define functions $p_{\a}:\f\to\bu(h_{\a}\otimes A)$ by $p_{\a}(0):=1$ and for $\chi\in\f-\{0\}$,
\begin{eqnarray*}
p_{\a}(\chi)&:=&-\frac{1}{|\chi|}\sum_{\psi\in\f(\chi)-\{0\}}\m(\psi)\left(h_{\a}\otimes\pi(\psi)\right)p_{\a}(\chi-\psi)
\end{eqnarray*}

For all $i\in I$, define $p_i(\chi):=p_{\alpha_i}(\chi)$.

\begin{rem}
\begin{enumerate}
\item[(1)] The $p_\alpha(\chi)$ are a generalization of Garland's $\Lambda_k\left(h_{\a}\otimes t^r\right)$ because
$p_\alpha\left(k\chi_{t^r}\right)=\Phi\left(\Lambda_{k-1}\left(h_{\a}\otimes t^r\right)\right)$, \cite[page 502]{Gar}.\\

\item[(2)] Given $\alpha\in R^+$ and $\psi\in\f$ \\ $X_\alpha\left(|\psi|\chi_1\right)X_{-\alpha}(\psi)\equiv(-1)^{|\psi|}p_\alpha(\psi)\mod(\bu(\fg\otimes A)(x_\alpha\otimes A))$, \cite[ Lemma 5.4]{C}.
\end{enumerate}
\end{rem}

We record some basic properties of $p_\alpha(\chi) $ for the rest of the paper.
\begin{prop}\label{degp}
Let $\alpha,\beta\in R$, $S\subset A$, $\chi,\varphi\in\f(S)$, $i \in I$,  and $b\in S$. Then
\begin{enumerate}
\item  $p_\alpha\left(\chi_b\right)=-\left(h_\alpha\otimes b\right)$\label{palpha1}\\
\item $p_\alpha(\chi)=(-1)^{|\chi|}\prod_{a\in A}(h_\alpha\otimes a)^{(\chi(a))}+ \textnormal{elements of }\bu(h_\alpha\otimes A)\textnormal{ of degree less than }|\chi|$\label{degpalpha}\\
\item $p_\alpha(\chi)p_\beta(\varphi)=p_\beta(\varphi)p_\alpha(\chi)$\label{palphapbeta}\\
\item $p_i(\chi)p_i(\varphi)=\prod_{a\in A}\binom{(\chi+\varphi)(a)}{\chi(a)}p_i(\chi+\varphi)+u$, where $u$ is in $\Z-\span\left\{p_i(\psi):\psi\in\f(S)\right\}$ with $\deg u<|\chi|+|\varphi|$
\end{enumerate}
\end{prop}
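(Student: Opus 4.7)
My plan is to prove parts (1)--(4) in sequence; (1)--(3) are essentially immediate from the definitions, and (4) contains the only real content.

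For (1), I substitute $\chi = \chi_b$ into the recursion: since $\f(\chi_b) - \{0\} = \{\chi_b\}$ with $\m(\chi_b) = 1$, $\pi(\chi_b) = b$, and $p_\alpha(0) = 1$, the formula collapses to $p_\alpha(\chi_b) = -(h_\alpha \otimes b)$. For (2), I induct on $|\chi|$, the case $|\chi|=0$ being trivial. For $|\chi| \geq 1$ I split the defining sum according to $|\psi|=1$ versus $|\psi|\geq 2$: when $|\psi| \geq 2$, the factor $p_\alpha(\chi - \psi)$ has degree at most $|\chi|-|\psi|$ by induction, so $(h_\alpha \otimes \pi(\psi))\,p_\alpha(\chi-\psi)$ has degree at most $|\chi|-1$. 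For the leading contribution I use the $|\psi|=1$ terms $\psi=\chi_b$ ($b\in\supp\chi$), apply the inductive hypothesis to $p_\alpha(\chi-\chi_b)$, and invoke the divided-power identity $(h_\alpha\otimes b)(h_\alpha\otimes b)^{(\chi(b)-1)} = \chi(b)\,(h_\alpha\otimes b)^{(\chi(b))}$ together with $\sum_{b\in\supp\chi}\chi(b) = |\chi|$ to cancel the prefactor $-1/|\chi|$ and recover the advertised leading term $(-1)^{|\chi|}\prod_a (h_\alpha\otimes a)^{(\chi(a))}$.

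Part (3) is immediate: $h_\alpha, h_\beta \in \overline{\fh}$, which is abelian, so $\overline{\fh} \otimes A$ is an abelian Lie subalgebra of $\fg \otimes A$, and $\bu(\overline{\fh} \otimes A)$ is therefore a commutative subalgebra containing both $p_\alpha(\chi)$ and $p_\beta(\varphi)$.

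For (4), the leading-term matching is straightforward: by (2) and the commutativity from (3), the degree $|\chi|+|\varphi|$ component of $p_i(\chi)\,p_i(\varphi)$ equals $(-1)^{|\chi|+|\varphi|}\prod_a (h_i\otimes a)^{(\chi(a))}(h_i\otimes a)^{(\varphi(a))}$, and the divided-power identity $x^{(m)}x^{(n)} = \binom{m+n}{m}x^{(m+n)}$ rewrites this as $(-1)^{|\chi+\varphi|}\prod_a\binom{(\chi+\varphi)(a)}{\chi(a)}(h_i\otimes a)^{((\chi+\varphi)(a))}$, which by (2) is exactly the leading term of $\prod_a\binom{(\chi+\varphi)(a)}{\chi(a)}\,p_i(\chi+\varphi)$. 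Hence $u$ has degree strictly less than $|\chi|+|\varphi|$. The main obstacle, and the only nontrivial point, is upgrading this to a $\Z$-span statement. My plan is to induct on $|\chi|+|\varphi|$: use the recursion to rewrite $p_i(\chi)\,p_i(\varphi) = -\frac{1}{|\chi|}\sum_{\psi}\m(\psi)(h_i\otimes\pi(\psi))\,p_i(\chi-\psi)\,p_i(\varphi)$, apply the inductive hypothesis to each smaller product $p_i(\chi-\psi)\,p_i(\varphi)$ to put it in the $\Z$-span of $\{p_i(\mu)\}$, then expand the resulting $(h_i\otimes\pi(\psi))\,p_i(\mu)$ back into $p_i$-form via the recursion. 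Finally, reconcile the denominator $1/|\chi|$ against the multinomial combinatorics of the $\psi = \chi_b$ contributions, exactly as in the leading-term calculation for (2). I expect this integrality bookkeeping to be the crux of the argument.
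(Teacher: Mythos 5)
Your treatment of (1)--(3) is correct and is essentially the paper's: (1) is the direct computation, (2) is the same induction on $|\chi|$ (your split into $|\psi|=1$ and $|\psi|\geq2$ and the cancellation of the $-1/|\chi|$ prefactor via $\sum_{b\in\supp\chi}\chi(b)=|\chi|$ is exactly the intended argument), and for (3) your justification is in fact the right one: $p_\alpha(\chi)$ and $p_\beta(\varphi)$ lie in the commutative subalgebra $\bu(\fh\otimes A)$, which is all that is needed (the paper's stronger assertion that this subalgebra is central in $\bu(\fg\otimes A)$ is not needed, and commutativity suffices as you say).

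The genuine gap is in (4). The paper does not prove (4) here at all; it cites Lemma 5.1.2 of \cite{BC}. You prove only the easy half: the top-degree matching via (2), (3) and the divided-power identity, which shows $\deg u<|\chi|+|\varphi|$ but says nothing about $u$ lying in the $\Z$-span of $\left\{p_i(\psi)\right\}$ -- and that membership is the whole content of the statement. Your proposed induction does not deliver it as described: after writing $p_i(\chi)p_i(\varphi)=-\tfrac{1}{|\chi|}\sum_{\psi}\m(\psi)\left(h_i\otimes\pi(\psi)\right)p_i(\chi-\psi)p_i(\varphi)$ and applying the inductive hypothesis and the recursion to re-expand each $\left(h_i\otimes\pi(\psi)\right)p_i(\mu)$ in $p_i$-form, every coefficient you produce carries the overall factor $1/|\chi|$, and nothing in the outline shows that the resulting integer combination is divisible by $|\chi|$; the phrase ``reconcile the denominator against the multinomial combinatorics'' is precisely the assertion to be proved, and the $\psi=\chi_b$ bookkeeping that rescues the leading term in (2) does not control the lower-degree coefficients. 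So as written, (4) is a plan whose crux -- the integrality -- is missing; to complete it you would need either to reproduce the argument of \cite{BC} (organized so that no division by $|\chi|$ ever occurs, e.g.\ via an explicit closed expansion of $p_i(\chi)p_i(\varphi)$ in the $p_i(\psi)$), or to exploit the identification of the $p_\alpha$ with Garland-type $\Lambda$'s noted in the paper's Remark, where the analogous product formula is known.
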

\begin{proof}
(1) is easily checked by direct calculation. (2) can be shown by induction on $|\chi|$ using (1) for the base case. (3) is true because $p_{\a}(\chi),p_{\beta}(\varphi)\in\bu(\fh\otimes A)$, which in turn is a subset of the center of $\bu(\fg\otimes A)$. (4) is proved as Lemma 5.1.2 in \cite{BC}.
\end{proof}

\begin{rem}\label{hbasis}
Note that, by induction on $|\chi|$, Proposition \ref{degp}\eqref{degpalpha} tells us that if $\bb$ is a $\C$-basis for $A$ then $\left\{p_\alpha(\chi):\chi\in\f(\bb)\right\}$ is a $\C$-basis for $\bu\left(\left\{h_\alpha\right\}\otimes A\right)$ and furthermore that the set of all products of elements of the set $\{p_i(\chi)\ |\ i\in I,\ \chi\in\f(\bb)\}$ is a $\C$-basis for $\bu\left(\fh\otimes A\right)$.
\end{rem}

\subsection{}
Assume that $A$ has a basis $\bb$, which is closed under multiplication.
\begin{defn}
 Define the integral form  $\bu_\Z(\fg\otimes A)$  to be the $\Z$-subalgebra of $\bu(\fg\otimes A)$ generated by the following
$$(x_{\a,k}\otimes b)^{(s)}; \  \  \a\in R_{\0},  \ k\in[\mu(\a)], \  b \in \bb, \  s\in\Z_{\geq0}$$
$$x_{\gamma,n}\otimes c;  \  \  \gamma\in R_{\1}, \  n\in[\mu(\gamma)], \ c \in \bb$$
$$p_i(\chi); \ \  i\in I, \ \chi\in\f(\bb)$$
\end{defn}

Define a \emph{monomial} in $\bu_\Z(\fg\otimes A)$ to be any finite product of elements of the generating set of $\bu_\Z(\fg\otimes A)$. Given a monomial $m$ its \emph{factors} are elements of the generating set of $\bu_\Z(\fg\otimes A)$ appearing in $m$.

The goal of this paper is to prove the following theorem, which is the analog of Theorem 3.1.3 in \cite{BC}.

\begin{thm}\label{thm}
The $\Z$-superalgebra $\bu_\Z(\fg\otimes A)$ is a free $\Z$-module. Let $\left(\preccurlyeq,R\cup I\right)$ be a total order. Then a $\Z$ basis of $\bu_\Z(\fg\otimes A)$ is given by the set $\mathcal{B}$ of all products (without repetitions) of elements of the set
$$\left\{\left(x_{\a,k}\otimes a_{\a,k}\right)^{(r_{\a,k})},p_i(\psi_i),\left(x_{\gamma,n}\otimes c_{\gamma,n}\right)\right\}$$
where $\a\in R_{\0}$, $k\in[\mu(\a)]$, $i\in I$, $\gamma\in R_{\1}$, $n\in[\mu(\gamma)]$, $\psi_i\in\f(\bb)$, $a_{\alpha,k},c_{\gamma,n}\in\bb$, and $r_{\alpha,k}\in\Z_{\geq0}$
taken in the order given by $\left(\preccurlyeq,R\cup I\right)$.
\end{thm}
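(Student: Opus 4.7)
The plan is to follow the standard two-part PBW-type strategy: first argue that the set $\mathcal{B}$ is $\Z$-linearly independent by viewing $\bu_\Z(\fg\otimes A)$ as a subring of $\bu(\fg\otimes A)$ and comparing with a PBW basis over $\C$, then show that $\mathcal{B}$ spans $\bu_\Z(\fg\otimes A)$ over $\Z$ by straightening arbitrary monomials. Throughout, all arguments will be organized by induction on the degree filtration of $\bu(\fg\otimes A)$ defined in Section \ref{defdeg}.

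For linear independence, I would expand each element of $\mathcal{B}$ in the PBW basis of $\bu(\fg\otimes A)$ associated to the Chevalley basis $B$ and the basis $\bb$ of $A$. A divided power $(x_{\a,k}\otimes a)^{(r)}$ contributes a scalar multiple of $(x_{\a,k}\otimes a)^r$, each factor $p_i(\psi_i)$ contributes, by Proposition \ref{degp}\eqref{degpalpha}, the product $\prod_{a\in A}(h_{\alpha_i}\otimes a)^{(\psi_i(a))}$ up to sign and lower degree terms, and the odd factors $(x_{\gamma,n}\otimes c)$ appear as-is. Taking top-degree parts one sees that distinct elements of $\mathcal{B}$ yield distinct PBW monomials (in particular, the Cartan contributions are distinguished because the products of $p_i(\psi_i)$'s form a $\C$-basis for $\bu(\fh\otimes A)$ by Remark \ref{hbasis}), and hence $\mathcal{B}$ is $\Z$-linearly independent.

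For spanning, I would show by induction on degree that any monomial in the generators of $\bu_\Z(\fg\otimes A)$ is a $\Z$-linear combination of elements of $\mathcal{B}$. The inductive step relies on the straightening identities of Section 4 to accomplish four manipulations: (i) commuting adjacent generators so that their indexing data respects $\preccurlyeq$, which introduces supercommutators that by the straightening identities lie in $\bu_\Z(\fg\otimes A)$ and have strictly smaller degree; (ii) collapsing two divided powers $(x_{\a,k}\otimes a)^{(r)}(x_{\a,k}\otimes a)^{(s)}=\binom{r+s}{r}(x_{\a,k}\otimes a)^{(r+s)}$ to remove repetitions; (iii) combining two Cartan factors $p_i(\chi)p_i(\varphi)$ via Proposition \ref{degp}\eqref{palphapbeta} and Proposition \ref{degp}(4) into a $\Z$-combination of $p_i(\chi+\varphi)$ and lower-degree elements of $\bu_\Z(\fg\otimes A)$; and (iv) reducing odd repetitions $(x_{\gamma,n}\otimes c)^2=\tfrac{1}{2}[x_{\gamma,n},x_{\gamma,n}]\otimes c^2$, which must be identified with a generator of $\bu_\Z(\fg\otimes A)$ via Proposition \ref{rootprop}(2) and property (7) of the Chevalley basis.

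The main obstacle will be step (iv) together with the commutation relations in step (i) when $\a+\beta\in R$. Because the Cartan type Chevalley basis has roots of multiplicity greater than one, allows $2\a\in R$ for certain $\a$, and allows $-\a\notin R$ even when $\a\in R$, the structure constants $c_{\a,k}^{\beta,m}(j)$ must be tracked carefully and the straightening of a bracket $[(x_{\a,k}\otimes a)^{(r)},(x_{\beta,m}\otimes b)^{(s)}]$ requires a delicate analysis of iterated $\ad(x_{\a,k}\otimes a)$ acting on divided powers of $(x_{\beta,m}\otimes b)$. Assuming the identities of Section 4 produce the necessary $\Z$-integral formulas (in particular handling the $\pm 2$ constants in axiom (7) and the special $2\a\in R$ cases of Proposition \ref{rootprop}(2)), the induction closes and the spanning assertion follows, completing the proof.
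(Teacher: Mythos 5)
Your proposal is correct and follows essentially the same route as the paper: $\C$-linear independence via the super PBW theorem together with Remark \ref{hbasis}, and spanning by induction on degree, using the straightening identities of Section 4 (whose lower-degree error terms are exactly the content of Lemma \ref{brack}), identity \eqref{xaxa} for repeated divided powers, Proposition \ref{degp}(4) for repeated $p_i$'s, and \eqref{2gam} for odd repetitions. The only slips are cosmetic: the odd-square reduction rests on axiom (6)(c) (and Remark \ref{bborder}(1)'s order on $\bb$ when $2\gamma\in R$), not axiom (7), but this does not affect the argument.
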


\begin{rems}\label{bborder}
\begin{enumerate}
\item In the case where $\gamma\in R$ and $2\gamma\in R$ (which is described in Proposition \ref{rootprop}(2)) we will also need a total order $\precsim$ on the basis $\bb$ for $A$ in order to state our integral basis for $\bu_\bbz(\fg\otimes A)$ because in this case $(x_\gamma\otimes a)$ and $(x_\gamma\otimes b)$ do not commute. The products in $\B$ will be taken first in the order $\left(\preccurlyeq,R\cup I\right)$ and then in the order given by $(\precsim,\bb)$.

\item If $\fg$ is of type $S(n)$ or $H(n)$, so that $\fg\subsetneq\overline{\fg}$, we add the element $h_\delta$ to our Chevalley basis of $\fg$ to obtain a Chevalley basis for $\overline{\fg}$. All of the results in this work can then be applied to $\overline{\fg}$ by including $h_{\delta}$ as necessary.
\end{enumerate}
\end{rems}

\begin{cor} Let $\fg_+$, $\fg_{\0}^+$, $\fg_{\1}^+$ denote positive parts of $\fg$ , $\fg_{\0} $ and $\fg_{\1}$, respectively, in the $\Z$-grading. We have the following isomorphisms of $\Z$-modules.
\begin{itemize}
 \item[(1)]$\bu_\Z(\fg\otimes A)  \cong  \bu_\Z(\fg_{\0}\otimes A) \otimes_{\Z} \Lambda_{\Z}(\fg_{\1} \otimes A).$
 \item[(2)]$\bu_\Z((\fg_0\oplus \fg_+)\otimes A)\cong \bu_\Z(\fg_{\0}\otimes A) \otimes_{\Z} \Lambda_{\Z}(\fg_{\1}^+ \otimes A).$
\item[(3)]$\bu_\Z(\fg_{\0}\otimes A)\cong \bu_\Z(\fg_{0}\otimes A) \otimes_{\Z} \Lambda_{\Z}(\fg_{\0}^+ \otimes A).$
\end{itemize}
\end{cor}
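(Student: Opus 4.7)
The plan is to derive all three isomorphisms from Theorem \ref{thm} by choosing the total order $(\preccurlyeq, R\cup I)$ so as to segregate the generators of $\bu_\Z(\fg\otimes A)$ according to the desired decomposition. For (1), I would choose $\preccurlyeq$ so that every element of $R_{\0}\cup I$ precedes every element of $R_{\1}$. Each $b\in\B$ then factors uniquely as $b=b_{\0}\cdot b_{\1}$, where $b_{\0}$ is an ordered product of generators of the form $(x_{\a,k}\otimes a)^{(r)}$ or $p_i(\psi)$, and $b_{\1}$ is an ordered product (without repetition) of generators $x_{\gamma,n}\otimes c$ with $\gamma\in R_{\1}$. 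The set $\mathcal{B}_{\0}$ of all possible $b_{\0}$'s is a $\Z$-basis of $\bu_\Z(\fg_{\0}\otimes A)$, and the set $\mathcal{B}_{\1}$ of all possible $b_{\1}$'s is in natural bijection with the standard $\Z$-basis of $\Lambda_\Z(\fg_{\1}\otimes A)$ attached to the $\Z$-basis $\{x_{\gamma,n}\otimes c\}$ of $\fg_{\1}\otimes A$. The multiplication map
\[
\Phi\colon \bu_\Z(\fg_{\0}\otimes A)\otimes_\Z\Lambda_\Z(\fg_{\1}\otimes A)\to\bu_\Z(\fg\otimes A),\qquad u\otimes(v_1\wedge\cdots\wedge v_k)\mapsto u\cdot v_1\cdots v_k,
\]
then sends the evident tensor-product basis bijectively onto $\B$, giving (1).

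For (2), I would apply the same strategy to the sub-$\Z$-algebra of $\bu_\Z(\fg\otimes A)$ generated by the generators indexed by $R_{\0}\cup I\cup R_{\1}^+$; since $\fg_0\oplus\fg_+=\fg_{\0}\oplus\fg_{\1}^+$, this sub-$\Z$-algebra coincides with $\bu_\Z((\fg_0\oplus\fg_+)\otimes A)$, and the splitting $b=b_{\0}\cdot b_{\1}^+$ with $b_{\1}^+$ involving only $\gamma\in R_{\1}^+$ yields the claimed isomorphism. For (3), I would refine the order inside $\bu_\Z(\fg_{\0}\otimes A)$ by additionally requiring that $I\cup R_0$ precede $R_{\0}^+$ in $\preccurlyeq$, and split each $b_{\0}\in\mathcal{B}_{\0}$ as $b_0\cdot b_{\0}^+$: the $b_0$'s exhaust the integral basis of $\bu_\Z(\fg_0\otimes A)$, and the $b_{\0}^+$'s identify with a $\Z$-basis of the remaining factor via the corresponding multiplication map.

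The essential technical point in every case is to show that for the relevant subset $T\subseteq R\cup I$, the set of restricted monomials (using only generators indexed by $T$) really is a $\Z$-basis of the sub-$\Z$-algebra they generate. Linear independence is automatic from Theorem~\ref{thm}; the content is spanning, which reduces to the observation that the straightening identities of Section~4 respect the relevant partition: the $\Z_2$-grading for (1), positivity in the $\Z$-grading for (2), and the degree-zero-versus-positive split inside $R_{\0}\cup I$ for (3). A secondary subtlety in (1) and (2) appears when some $\gamma\in R_{\1}$ satisfies $2\gamma\in R$, so that $x_\gamma\otimes a$ and $x_\gamma\otimes b$ fail to anticommute; here the secondary order $\precsim$ on $\bb$ from Remark~\ref{bborder}(1) must be carried through to identify the odd factor with the exterior algebra on the nose. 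This bookkeeping is the one place where the argument is not purely formal.
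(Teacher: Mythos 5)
Your proposal follows essentially the same route as the paper: the paper proves (1) as in \cite{BC} (Corollary 3.1.5) and proves (2) and (3) by applying Theorem \ref{thm} with an order $\preccurlyeq$ placing $R_{\0}\cup I$ before $\bigcup_{z\in\Z_{\geq0}}R_{2z+1}$, respectively $R_0\cup I$ before $\bigcup_{z\in\N}R_{2z}$, so that each basis monomial factors as you describe and multiplication identifies the tensor-product basis with $\B$. Your added remarks on spanning of the restricted monomials and on the secondary order $\precsim$ when $2\gamma\in R$ are exactly the points the paper handles via its straightening identities, Lemma \ref{brack}, and Remark \ref{bborder}(1), so the argument is the same in substance, only spelled out in more detail.
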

\begin{proof}
(1) can be proved similarly to Corollary 3.1.5 in \cite{BC}. (2) is proved similarly but we apply Theorem \ref{thm} to choose a basis $\mathcal{B}$ of $\bu_\Z((\fg_0\oplus \fg_+)\otimes A)$ where $\preccurlyeq$ is such that $(R_{\0}\cup I)\preccurlyeq\bigcup_{z\in\Z_{\geq0}}R_{2z+1} $. (3) is again similar; this time we apply the theorem to $\bu_\Z(\fg_{\0}\otimes A)$ and the order should be such that $(R_0\cup I)\preccurlyeq\bigcup_{z\in\N}R_{2z}$.
\end{proof}

\section{Straightening Identities}

In this section we state all commutation relations and provide a proofs for some of these identities. We have commutation relations involving even generators and those involving even and odd generators. The proofs will follow the lists of the straightening identities. Note that in all of these identities all of the coefficients are in $\Z$. Also, note that in the cases which are not listed the factors  either commute or anticommute.

\subsection{Even Generator Straightening Identities}

Given $\chi\in\f(S)$ define $\supp\chi:=\{s\in S:\chi(s)>0\}$.

Given $\chi\in\f$ define
$$\cs(\chi):=\left\{\psi\in\f(\f):\sum_{\phi\in\f}\psi(\phi)\phi\leq\chi\right\}\textnormal{ and }\cs_k(\chi):=\cs(\chi)\cap\f_k(\f)$$
Given $j\geq0$ define
$$\cp(j):=\left\{\lambda\in\f\left(\Z_{\geq0}\right):\sum_{m\in\supp\lambda}\lambda(m)m=j\right\}
\textnormal{ and }\cp_k(j):=\cp(j)\cap\f_k\left(\Z_{\geq0}\right)$$
For all $j,k\in\Z_{\geq0}$, $\a\in R$ and $c,d\in A$, define $D^{\a,1}_{j,k}(0,d):=0=:D^{\a,1}_{j,k}(c,0)$ and $D^{\a,1}_{j,k}:(A-\{0\})^2\to\bu(\fg\otimes A)$ by
$$D^{\a,1}_{j,k}(d,c):=\sum_{\lambda\in\cp_k(j)}\prod_{m\in\supp\lambda}\left(x_{\a,1}\otimes d^mc\right)^{(\lambda(m))}$$

\begin{rem}\label{D}
The definition of $D_{j,k}^{\a,1}(d,c)$ above is the analog of the recursive definition of $D_{\a}^\pm\left(j\chi_1,j\chi_d,k\chi_c\right)$ from \cite{C} by Proposition 5.2 in \cite{C}.
\end{rem}

Define $x_{\a,k}=0$ if $\a\notin R$. The notation below is from the definition of the Chevalley basis.

\begin{prop}\label{strteven}
For all $\eta\in R_{\0}$, $u\in[\mu(\eta)]$, $i, j  \in I$, $\alpha,\zeta\in R_0$, $\beta,\gamma\in R_{\0}$, with $\beta+\gamma\in R$ and $2\beta+\gamma,\beta+2\gamma\notin R$, $m\in[\mu(\beta)]$, $n\in[\mu(\gamma)]$, $\vartheta\in R_{\0}-R_0$, $q\in[\mu(\vartheta)]$, $\chi,\varphi\in\f$, $a,b\in A$, and $r,s\in\Z_{\geq0}$. Where $v^\prime\in[\mu(2\a+\vartheta)]$ and $\varepsilon\in\{\pm1\}$ are such that $[x_{\a,1}[x_{\a,1},x_{\vartheta,q}]]=2\varepsilon x_{2\a+\vartheta,v^\prime}$.
\begin{eqnarray}
p_i(\chi)p_j(\varphi)&=&p_j(\varphi)p_i(\chi)\label{pipj}\\
\left(x_{\eta,u}\otimes a\right)^{(r)}\left(x_{\eta,u}\otimes a\right)^{(s)}
&=&\binom{r+s}{s}\left(x_{\eta,u}\otimes a\right)^{(r+s)}\label{xaxa}\\
\left(x_{\alpha,1}\otimes a\right)^{(r)}\left(x_{-\alpha,1}\otimes b\right)^{(s)}
&=&\sum_{\substack{j,k,v\in\Z_{\geq0}\\j+k+v\leq\min(r,s)}}(-1)^{j+k+v}D^{-\alpha,1}_{j,s-j-k-v}(ab,b)p_{\alpha}\left(k\chi_{ab}\right)\nonumber\\
&\times&D^{\alpha,1}_{v,r-j-k-v}(ab,a)\label{x+x-}\\
\left(x_{\eta,u}\otimes a\right)^{(r)}p_i(\chi)
&=&\sum_{\psi\in\cs_r(\chi)}p_i\left(\chi-\sum_{\phi\in\f}\psi(\phi)\phi\right)\nonumber\\
&\times&\prod_{\varphi\in\f}\left(\binom{\eta(h_i)+|\varphi|-1}{|\varphi|}\m(\varphi)\left(x_{\eta,u}\otimes a\pi(\varphi)\right)\right)^{(\psi(\varphi))}\label{xrpi}
\end{eqnarray}
\begin{eqnarray}
p_i(\chi)\left(x_{\eta,u}\otimes a\right)^{(r)}
&=&\sum_{\psi\in\cs_r(\chi)}\prod_{\varphi\in\f}\left(\binom{-\eta(h_i)+|\varphi|-1}{|\varphi|}
\m(\varphi)\left(x_{\eta,u}\otimes a\pi(\varphi)\right)\right)^{(\psi(\varphi))}\nonumber\\
&\times&p_i\left(\chi-\sum_{\phi\in\f}\psi(\phi)\phi\right)\label{pixr}\\
\left(x_{\beta,m}\otimes a\right)^{(r)}\left(x_{\gamma,n}\otimes b\right)^{(s)}
&=&\sum_{j=0}^{\min(r,s)}\left(x_{\gamma,n}\otimes b\right)^{(s-j)}\left(\sum_{\psi\in\f_j([\mu(\beta+\gamma)])}\prod_{v=1}^{\mu(\beta+\gamma)}\left(c_{\beta,m}^{\gamma,n}(v)\right)^{\psi(v)}\left(x_{\beta+\gamma,v}\otimes ab\right)^{(\psi(v))}\right)\nonumber\\
&\times&\left(x_{\beta,m}\otimes a\right)^{(r-j)}\label{xbmarxgnbs}\\
\left(x_{\alpha,1}\otimes a\right)^{(r)}\left(x_{\vartheta,q}\otimes b\right)^{(s)}
&=&\sum_{\substack{j_1,j_2\in\Z_{\geq0}\\j_1+j_2\leq s\\j_1+2j_2\leq r}}\left(x_{\vartheta,q}\otimes b\right)^{(s-j_1-j_2)}\varepsilon^{j_2}\left(x_{2\alpha+\vartheta,v^\prime}\otimes a^2b\right)^{(j_2)}\nonumber\\
&\times&\left(\sum_{\psi\in\f_{j_1}([\mu(\a+\vartheta)])}\prod_{v=1}^{\mu(\alpha+\vartheta)}\left(c_{\alpha,1}^{\vartheta,q}(v)\right)^{\psi(v)}\left(x_{\alpha+\vartheta,v}\otimes ab\right)^{(\psi(v))}\right)\nonumber\\
&\times&\left(x_{\alpha,1}\otimes a\right)^{(r-j_1-2j_2)}\label{xa1rxtqs}\\
\left(x_{\alpha,1}\otimes a\right)^{(r)}\left(x_{\zeta,1}\otimes b\right)^{(s)}
&=&\sum_{\substack{\psi\in\f\left(\N^2\right)\\\sum j\psi(j,k)\leq r\\\sum k\psi(j,k)\leq s}}\varepsilon_\psi\left(x_{\zeta,1}\otimes b\right)^{\left(s-\sum k\psi(j,k)\right)}\prod_{(j,k)\in\supp\psi}\left(x_{j\alpha+k\zeta,1}\otimes a^jb^k\right)^{(\psi(j,k))}
\nonumber\\
&\times&\left(x_{\alpha,1}\otimes a\right)^{\left(r-\sum j\psi(j,k)\right)}\label{xaxb}
\end{eqnarray}
\end{prop}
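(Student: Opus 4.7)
The plan is to prove each of the eight identities by induction on the total exponent $r+s$, using as base cases the Chevalley basis relations recorded in the previous subsection together with the recursive definitions of $p_\alpha$ and $D^{\alpha,1}_{j,k}$. Identities \eqref{pipj} and \eqref{xaxa} are essentially immediate: \eqref{pipj} is a restatement of Proposition \ref{degp}(3), and \eqref{xaxa} follows from axiom (3) in the Chevalley basis definition, which forces $[x_{\eta,u}\otimes a,x_{\eta,u}\otimes a]=0$, so that the divided powers of $x_{\eta,u}\otimes a$ satisfy the usual multinomial product rule.

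For identities \eqref{x+x-}, \eqref{xrpi} and \eqref{pixr} I would transplant the arguments of \cite[\S5]{C} and \cite{BC}, replacing a single root vector $x_\alpha$ by its Chevalley representative $x_{\alpha,1}$. The recursion $(x_{\alpha,1}\otimes a)^{(r)}=\frac{1}{r}(x_{\alpha,1}\otimes a)(x_{\alpha,1}\otimes a)^{(r-1)}$ reduces each identity to its $r=1$ instance: for \eqref{x+x-} axiom (4) produces the $p_\alpha$ term via $[x_{\alpha,1}\otimes a,x_{-\alpha,1}\otimes b]=h_\alpha\otimes ab=-p_\alpha(\chi_{ab})$, while for \eqref{xrpi} and \eqref{pixr} axiom (2) together with the defining recursion of $p_i$ lets one commute $x_{\eta,u}\otimes a$ past each factor appearing in $p_i(\chi)$. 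The resulting combinatorics are packaged by the sums over $\cs_r(\chi)$, and the multinomial coefficients come directly from the $\m(\varphi)$ terms in the recursion for $p_i$.

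Identities \eqref{xbmarxgnbs}--\eqref{xaxb} carry the genuinely new content, because they must accommodate $\dim\fg_\alpha>1$ and the phenomenon (Proposition \ref{rootprop}(2)) that $2\alpha\in R$ for certain roots. For \eqref{xbmarxgnbs} I induct on $r+s$; the base case $r=s=1$ is axiom (6) expanded in terms of the coefficients $c_{\beta,m}^{\gamma,n}(v)$, and the exclusion $2\beta+\gamma,\beta+2\gamma\notin R$ prevents secondary terms from entering the induction. For \eqref{xa1rxtqs} the same inductive scheme applies, but axiom (7) contributes an additional summand $\varepsilon\, x_{2\alpha+\vartheta,v^\prime}\otimes a^2 b$ whenever a second factor $x_{\alpha,1}\otimes a$ acts on an existing bracket; this accounts for the index $j_2$ and the constraint $j_1+2j_2\leq r$. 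The most intricate case \eqref{xaxb} arises because iterated adjoint action of $x_{\alpha,1}$ on $x_{\zeta,1}$ can produce a whole string of combinations $j\alpha+k\zeta\in R$; the signs $\varepsilon_\psi$ are forced by repeated application of axiom (6)(a), and the sum remains finite because the $\alpha$-string through $\zeta$ has finite length.

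The main obstacle is the bookkeeping in \eqref{xaxb}: one must verify that the signs $\varepsilon_\psi$ are consistent under the induction, that the scalars produced by iterated brackets remain integers, and that the formula collapses correctly to \eqref{xbmarxgnbs} and \eqref{xa1rxtqs} in the degenerate cases where no higher multiples occur. A secondary difficulty in \eqref{xbmarxgnbs} and \eqref{xa1rxtqs} is the sum over $\psi\in\f_j([\mu(\beta+\gamma)])$, which records the distribution of the $\mu(\beta+\gamma)$ independent basis vectors of $\fg_{\beta+\gamma}$ among the $j$ new factors; here one uses that these vectors commute among themselves, so the product $\prod_{v}(x_{\beta+\gamma,v}\otimes ab)^{(\psi(v))}$ is independent of the chosen ordering.
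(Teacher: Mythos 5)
Your proposal is correct in outline and, for most of the identities, follows the same route as the paper: \eqref{pipj} is just Proposition \ref{degp}(3), \eqref{xaxa} is a direct computation, \eqref{xrpi} and \eqref{pixr} are proved by induction on $r$ with the $r=1$ case imported from Lemma 4.1.4 of \cite{BC}, and \eqref{xbmarxgnbs}, \eqref{xa1rxtqs} follow from Lemma 4.1.5(1),(2) of \cite{BC} together with the Multinomial Theorem, which is exactly the bookkeeping over $\f_j([\mu(\beta+\gamma)])$ you describe. Where you genuinely diverge is in \eqref{x+x-} and \eqref{xaxb}, which you single out as the new, delicate cases and propose to reprove from scratch, flagging the recursion for $D^{\alpha,1}_{j,k}$ and the consistency of the signs $\varepsilon_\psi$ as the main obstacles. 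The paper instead notes that both identities involve only roots in $R_0$, so they take place inside $\bu(\fg_0\otimes A)$ and are already available: for \eqref{x+x-}, $\left\{x_{-\alpha,1},h_\alpha,x_{\alpha,1}\right\}$ is an $\mathfrak{sl}_2$-triple, so Lemma 5.4 of \cite{C} applies verbatim once one knows (Remark \ref{D}, i.e.\ Proposition 5.2 of \cite{C}) that the closed-form $D^{\alpha,1}_{j,k}(d,c)$ used here agrees with the recursive $D$ of \cite{C}; and \eqref{xaxb} is literally equation (4.1.6) of Proposition 4.1.2 in \cite{BC}. Thus the sign and integrality bookkeeping you call the main obstacle is already settled in the prequels and needs no new argument, while the genuinely new content of this proposition is rather the multiplicity-aware identities \eqref{xbmarxgnbs}, \eqref{xa1rxtqs} and the explicit induction for \eqref{xrpi}. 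If you do insist on an independent proof of \eqref{x+x-}, be aware that the reduction to the $r=1$ case is not a one-line step: the inductive step requires the recursion satisfied by the $D$-functions (precisely the equivalence recorded in Remark \ref{D}) and auxiliary commutations of $x_{\alpha,1}\otimes a$ past the $D$- and $p_\alpha$-factors, which is the substantial part of the argument in \cite{C}; your citation of those arguments covers this, but your sketch as written understates it.
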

Where $\varepsilon_\psi\in\{\pm1\}$ for all $\psi\in\f\left(\N^2\right)$, $x_{j\gamma+k\beta}=0$ if $j\gamma+k\beta\notin R$ and all of the unlabeled sums are over all $(j,k)\in\supp\psi$.
\begin{proof}
\eqref{pipj} is clear because for all $\alpha\in R$ and all $\chi\in\f$ $p_{\a}(\chi)$ is in the center of $\bu(\fg\otimes A)$. \eqref{xaxa} is easily shown by direct calculation. \eqref{x+x-} is a slight variation of a special case of Lemma 5.4 in \cite{C}. Lemma 5.4 in \cite{C} applies because for all $\alpha\in R_0$ $\left\{x_{-\alpha,1},h_\alpha,x_{\alpha,1}\right\}$ is an $\lie{sl}_2$-triple. Proposition 5.2 in \cite{C} gives the necessary equivalence of our nonrecursive definition of $D_{j,k}^{\pm\alpha}(d,c)$ and the more general recursive definition in \cite{C}, (see Remark \ref{D}). \eqref{xrpi} is proved by induction on $r$. The base case $(r=1)$ is proved in Lemma 4.1.4 in \cite{BC}. Assuming the base case we have
\begin{eqnarray*}
(r+1)\left(x_{\eta,u}\otimes a\right)^{(r+1)}p_i(\chi)
&=&\left(x_{\eta,u}\otimes a\right)^{(r)}\left(x_{\eta,u}\otimes a\right)p_i(\chi)\\
&=&\left(x_{\eta,u}\otimes a\right)^{(r)}\sum_{\psi^\prime\in\f(\chi)}p_i(\chi-\psi^\prime)\binom{\eta(h_i)+|\psi^\prime|-1}{|\psi^\prime|}\m(\psi^\prime)\left(x_{\eta,u}\otimes a\pi(\psi^\prime)\right)\\
&=&\sum_{\psi^\prime\in\f(\chi)}\binom{\eta(h_i)+|\psi^\prime|-1}{|\psi^\prime|}\m(\psi^\prime)\sum_{\tau\in\cs_r(\chi-\psi^\prime)}p_i\left(\chi-\psi^\prime-\sum_{\phi\in\f}\tau(\phi)\phi\right)\nonumber\\
&\times&\prod_{\varphi\in\f}\left(\binom{\eta(h_i)+|\varphi|-1}{|\varphi|}\m(\varphi)\left(x_{\eta,u}\otimes a\pi(\varphi)\right)\right)^{(\tau(\varphi))}\left(x_{\eta,u}\otimes a\pi(\psi^\prime)\right)\\
&&(\textnormal{by the induction hypothesis})\\
&=&\sum_{\psi^\prime\in\f(\chi)}\sum_{\tau\in\cs_r(\chi-\psi^\prime)}p_i\left(\chi-\sum_{\phi\in\f}\left(\tau+\chi_{\psi^\prime}\right)(\phi)\phi\right)(\tau(\psi^\prime)+1)\nonumber\\
&\times&\prod_{\varphi\in\f}\left(\binom{\eta(h_i)+|\varphi|-1}{|\varphi|}\m(\varphi)\left(x_{\eta,u}\otimes a\pi(\varphi)\right)\right)^{\left(\left(\tau+\chi_{\psi^\prime}\right)(\varphi)\right)}\\
&=&\sum_{\psi\in\cs_{r+1}(\chi)}\sum_{\psi^\prime\in\f}\psi(\psi^\prime)p_i\left(\chi-\sum_{\phi\in\f}\psi(\phi)\phi\right)\nonumber\\
&\times&\prod_{\varphi\in\f}\left(\binom{\eta(h_i)+|\varphi|-1}{|\varphi|}\m(\varphi)\left(x_{\eta,u}\otimes a\pi(\varphi)\right)\right)^{\left(\psi(\varphi)\right)}\\
&=&(r+1)\sum_{\psi\in\cs_{r+1}(\chi)}p_i\left(\chi-\sum_{\phi\in\f}\psi(\phi)\phi\right)\nonumber\\
&\times&\prod_{\varphi\in\f}\left(\binom{\eta(h_i)+|\varphi|-1}{|\varphi|}\m(\varphi)\left(x_{\eta,u}\otimes a\pi(\varphi)\right)\right)^{\left(\psi(\varphi)\right)}
\end{eqnarray*}
So we have proved \eqref{xrpi}. \eqref{pixr} is proved similarly and hence the proof is omitted. \eqref{xbmarxgnbs} and \eqref{xa1rxtqs} are proved using Lemma 4.1.5(1) and (2) in \cite{BC}, respectively, and the Multinomial Theorem. \eqref{xaxb} is Proposition 4.1.2  equation (4.1.6) in \cite{BC}.
\end{proof}

\subsection{Odd and even generator straightening identities}

Recall that $R_{-1}$ is the set  of roots of height $-1$. 

\begin{prop}\label{strtodd}
For all $\alpha\in R_{\0}$, $\beta\in R_0$, $\gamma,\zeta\in R_{\1}$, $\vartheta\in R_{-1}$, $k\in[\mu(-\vartheta)]$, $m\in[\mu(\gamma)]$, $n\in[\mu(\zeta)]$, $q\in[\mu(\alpha)]$, $r\in\N$, $a,b\in A$, $i\in I$, $\chi\in\f$ Where $s\in[\mu(2\beta+\gamma)]$ and $\varepsilon\in\{\pm1\}$ are such that $\left[x_{\beta,1},\left[x_{\beta,1},x_{\gamma,1}\right]\right]=2\varepsilon x_{2\beta+\gamma,s}$.
\begin{eqnarray}
\left(x_{\gamma,m}\otimes a\right)p_i(\chi)
&=&\sum_{\psi\in\f(\chi)}\binom{|\psi|-1+\gamma(h_i)}{|\psi|}\m(\psi)p_i(\chi-\psi)\left(x_{\gamma,m}\otimes a\pi(\psi)\right)\label{xgammapi}\\
p_i(\chi)\left(x_{\gamma,m}\otimes a\right)
&=&\sum_{\psi\in\f(\chi)}\binom{|\psi|-1-\gamma(h_i)}{|\psi|}\m(\psi)\left(x_{\gamma,m}\otimes a\pi(\psi)\right)p_i(\chi-\psi)\label{pixgamma}\\
\left(x_{\gamma,m}\otimes a\right)\left(x_{\gamma,m}\otimes b\right)
&=&\left\{\begin{array}{cc}\pm\left(x_{2\gamma,j^\prime}\otimes ab\right)&\textnormal{if }c_{\gamma,m}^{\gamma,m}(j^\prime)=\pm2\textnormal{ for some }j^\prime\in[\mu(2\gamma)]\\0&\textnormal{otherwise}\end{array}\right.\label{2gam}\\
\left(x_{\vartheta,1}\otimes a\right)\left(x_{-\vartheta,k}\otimes b\right)
&=&-\left(x_{-\vartheta,k}\otimes b\right)\left(x_{\vartheta,1}\otimes a\right)+\left(h_{\sigma_\vartheta(k)}\otimes ab\right)\label{xdel1x-delk}\\
\left(x_{\gamma,m}\otimes a\right)\left(x_{\zeta,n}\otimes b\right)
&=&-\left(x_{\zeta,n}\otimes b\right)\left(x_{\gamma,m}\otimes a\right)
+\sum_{j=1}^{\mu(\gamma+\zeta)}c_{\gamma,m}^{\zeta,n}(j)\left(x_{\gamma+\zeta,j}\otimes ab\right)\label{xgammxzetn}\\
\left(x_{\alpha,q}\otimes a\right)^{(r)}\left(x_{\gamma,m}\otimes b\right)
&=&\left(x_{\gamma,m}\otimes b\right)\left(x_{\alpha,q}\otimes a\right)^{(r)}\nonumber\\ &+&\sum_{j=1}^{\mu(\alpha+\gamma)}c_{\alpha,q}^{\gamma,m}(j)\left(x_{\alpha+\gamma,j}\otimes ab\right)\left(x_{\alpha,q}\otimes a\right)^{(r-1)}\label{xalqrxgamm}\\
\left(x_{\beta,1}\otimes a\right)^{(r)}\left(x_{\gamma,m}\otimes b\right)
&=&\left(x_{\gamma,m}\otimes b\right)\left(x_{\beta,1}\otimes a\right)^{(r)}
+\sum_{j=1}^{\mu(\beta+\gamma)}c_{\beta,1}^{\gamma,m}(j)\left(x_{\beta+\gamma,j}\otimes ab\right)\left(x_{\beta,1}\otimes a\right)^{(r-1)}\nonumber\\
&+&\varepsilon\left(x_{2\beta+\gamma,s}\otimes a^2b\right)\left(x_{\beta,1}\otimes a\right)^{(r-2)}\label{xbeta1rxgamm}
\end{eqnarray}
\end{prop}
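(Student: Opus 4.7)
The plan is to split the seven identities into three natural groups and attack each with its own strategy. Group~A consists of \eqref{xgammapi} and \eqref{pixgamma}, which commute a single odd generator with $p_i(\chi)$. Group~B consists of \eqref{2gam}, \eqref{xdel1x-delk}, and \eqref{xgammxzetn}, which are single‐commutator statements between two odd factors. Group~C consists of \eqref{xalqrxgamm} and \eqref{xbeta1rxgamm}, which commute a divided power of an even generator past a single odd generator. Throughout, I use the supercommutator formula $[X\otimes a,Y\otimes b]_s=[X,Y]_s\otimes ab$ and the properties of the Chevalley basis from Section~2.4. The overall template follows \cite{BC}, with adjustments for odd parity.

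For Group~A, I induct on $|\chi|$, in direct analogy with the proof of \eqref{xrpi}/\eqref{pixr} given in \cite[Lemma 4.1.4]{BC}. The base case $|\chi|=1$, where $p_i(\chi_b)=-(h_i\otimes b)$ by Proposition~\ref{degp}\eqref{palpha1}, reduces to the identity $(x_{\gamma,m}\otimes a)(h_i\otimes b)=(h_i\otimes b)(x_{\gamma,m}\otimes a)-\gamma(h_i)(x_{\gamma,m}\otimes ab)$, which is the bracket $[h_i\otimes b,x_{\gamma,m}\otimes a]$. For the inductive step I expand $p_i(\chi)$ via its recursive definition, move $(x_{\gamma,m}\otimes a)$ past a single $(h_i\otimes\pi(\psi))$ (adding a $\gamma(h_i)$ correction), apply the inductive hypothesis to the remaining $p_i(\chi-\psi)$, and re-sum. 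The combinatorial step needed to collect terms is the negative-binomial Pascal identity
\[
\binom{|\psi|-1+\gamma(h_i)}{|\psi|-1}+\binom{|\psi|-1+\gamma(h_i)}{|\psi|}=\binom{|\psi|+\gamma(h_i)}{|\psi|},
\]
together with the usual identity $\m(\psi)=\sum_{b\in\supp\psi}\m(\psi-\chi_b)$ that repackages multinomial coefficients. The formula \eqref{pixgamma} is proved symmetrically.

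For Group~B, all three identities fall out of the supercommutator after one line of computation. Since $\gamma,\zeta,\vartheta,-\vartheta\in R_{\bar1}$, one has $XY+YX=[X,Y]_s$ for odd $X,Y$. For \eqref{xdel1x-delk}, Chevalley axiom (4) gives $[x_{\vartheta,1},x_{-\vartheta,k}]_s=\pm h_{\sigma_\vartheta(k)}$, and one chooses the sign convention so the ambient formula reads $+h_{\sigma_\vartheta(k)}\otimes ab$. For \eqref{xgammxzetn}, axiom (6)(b) gives the explicit structure constants. For \eqref{2gam}, axiom (6)(c) implies $[x_{\gamma,m},x_{\gamma,m}]_s$ is either $\pm 2x_{2\gamma,j'}$ or $0$; the identity for differing scalars $a\ne b$ then follows by polarization, applying the same-generator case to $x_{\gamma,m}\otimes(a+b)$ and subtracting $x_{\gamma,m}\otimes a$ and $x_{\gamma,m}\otimes b$ squares.

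For Group~C I use the standard associative-algebra identity for an even element $x$,
\[
x^{(r)}y=\sum_{k\ge 0}\tfrac{1}{k!}(\ad x)^{k}(y)\,x^{(r-k)},
\]
which is easily established by induction on $r$ starting from $xy=yx+[x,y]$. Apply it with $x=x_{\alpha,q}\otimes a$ (resp.\ $x=x_{\beta,1}\otimes a$) and $y=x_{\gamma,m}\otimes b$; the super‐sign is $+1$ because $x$ is even. For \eqref{xalqrxgamm}, the hypothesis of the statement (i.e.\ the case under consideration) forces $2\alpha+\gamma\notin R$, so that $(\ad x_{\alpha,q}\otimes a)^2(x_{\gamma,m}\otimes b)=0$ and only the $k=0,1$ terms survive, producing exactly the displayed two-term formula. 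For \eqref{xbeta1rxgamm}, Chevalley axiom (7) gives $(\ad x_{\beta,1})^{2}(x_{\gamma,m})=2\varepsilon\, x_{2\beta+\gamma,s}$, so the $k=2$ term contributes exactly $\varepsilon(x_{2\beta+\gamma,s}\otimes a^{2}b)x^{(r-2)}$. The main obstacle here is justifying truncation at $k=2$: I need to verify that $(\ad x_{\beta,1})^{3}(x_{\gamma,m})=0$, equivalently that $3\beta+\gamma\notin R$. This I check case-by-case from the explicit root systems of $W(n),S(n),\widetilde S(n),H(n)$ listed in Section~2 (for $\beta\in R_{0}$ the $\beta$-string through $\gamma$ has length at most~three), which is the tightest place where the structural differences between Cartan-type and classical root systems noted in the introduction must be controlled.
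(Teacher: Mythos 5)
Your proposal is correct and follows essentially the same route as the paper, whose proof simply cites \cite[Lemma 4.1.4]{BC} for \eqref{xgammapi}--\eqref{pixgamma}, invokes direct calculation for \eqref{2gam}--\eqref{xgammxzetn}, and uses induction on $r$ (your $\operatorname{ad}$-expansion identity is exactly that induction in closed form) for \eqref{xalqrxgamm}--\eqref{xbeta1rxgamm}. One remark: your polarization step really proves the anticommutator identity $(x_{\gamma,m}\otimes a)(x_{\gamma,m}\otimes b)+(x_{\gamma,m}\otimes b)(x_{\gamma,m}\otimes a)=\pm2\left(x_{2\gamma,j^\prime}\otimes ab\right)$ (or $0$), which is the only form that can hold when $a\neq b$ and is the form actually used later (Lemma \ref{brack}(6) and the reordering in the proof of the main theorem), so it delivers the intended content of \eqref{2gam}.
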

\begin{proof}
\eqref{xgammapi} and \eqref{pixgamma} are proved in Lemma 4.1.4 in \cite{BC}. \eqref{2gam}, \eqref{xdel1x-delk}, and \eqref{xgammxzetn} are all shown by direct calculation. \eqref{xalqrxgamm} and \eqref{xbeta1rxgamm} can be shown by induction on $r$.
\end{proof}


\section{Proof of Theorem \ref{thm}}

In this section we will prove Theorem \ref{thm} and give a triangular decomposition of $\bu_\Z(\fg\otimes A)$. The proof will proceed by induction on the degree of monomials in $\bu_\Z(\fg\otimes A)$ (using the definition of degree in Section \ref{defdeg}) and the following lemmas.

\subsection{}

\begin{lem}\label{brack}
For all $\a\in R_0$, $\beta,\gamma\in R_{\0}$, with $(\beta\neq-\gamma)$, $k\in[\mu(\beta)]$, $m\in[\mu(\gamma)]$, $\zeta,\eta\in R_{\1}$, $n\in[\mu(\zeta)]$, $q\in[\mu(\eta)]$, $\chi\in\f(\bb)$, $r,s\in\Z_{\geq0}$, $a,b\in\bb$, and $i\in I$.
\begin{itemize}
\item [(1)] $\left[\left(x_{\a,1}\otimes a\right)^{(r)},\left(x_{-\a,1}\otimes b\right)^{(s)}\right]$ has degree less than $r+s$.\\

\item [(2)] $\left[\left(x_{\beta,k}\otimes a\right)^{(r)},p_i(\chi)\right]$ has degree less than $r+|\chi|$.\\

\item [(3)] $\left[\left(x_{\beta,k}\otimes a\right)^{(r)},\left(x_{\gamma,m}\otimes b\right)^{(s)}\right]$ has degree less than $r+s$.\\

\item [(4)] $\left[\left(x_{\zeta,n}\otimes a\right),p_i(\chi)\right]$ has degree less than $|\chi|+1$.\\

\item [(5)] $\left[\left(x_{\beta,k}\otimes a\right)^{(r)},\left(x_{\zeta,n}\otimes b\right)\right]$ has degree less than $r+1$.\\

\item [(6)] $\left[\left(x_{\zeta,n}\otimes a\right),\left(x_{\eta,q}\otimes b\right)\right]\in\Z$--span $\B$ and has degree less than $2$.
\end{itemize}
\begin{proof}
For $(1)$ by \eqref{x+x-} we have
\begin{eqnarray*}
\left(x_{\a,1}\otimes a\right)^{(r)}\left(x_{-\a,1}\otimes b\right)^{(s)}
&=&\sum_{\substack{j,k,v\in\Z_{\geq0}\\0<j+k+v\leq\min(r,s)}}(-1)^{j+k+v}D^{-\a,1}_{j,s-j-k-v}(ab,b)p_{\a}\left(k\chi_{ab}\right)D^{\a,1}_{v,r-j-k-v}(ab,a)\\
&+&\left(x_{-\a,1}\otimes b\right)^{(s)}\left(x_{\a,1}\otimes a\right)^{(r)}\hskip.2in\left(\textnormal{by definition } D^{\vartheta,1}_{0,t}(d,c)=\left(x_{\vartheta,1}\otimes c\right)^{(t)}\right)
\end{eqnarray*}
By the definition of $D^{\vartheta,1}_{j,t}(d,c)$, $D^{-\a,1}_{j,s-j-k-v}(ab,b)p_{\a}\left(k\chi_{ab}\right)D^{\a,1}_{v,r-j-k-v}(ab,a)\in\Z$--span $\B$, and either the sum on the right-hand side is zero or its terms have degree $r+s-2j-k-2v<r+s$.

For $(2)$ by \eqref{xrpi} we have
\begin{eqnarray*}
&&\hskip-.4in\left(x_{\beta,k}\otimes a\right)^{(r)}p_i(\chi)\\
&=&\sum_{\psi\in\cs_r(\chi)-\{r\chi_0\}}p_i\left(\chi-\sum_{\phi\in\f}\psi(\phi)\phi\right)\prod_{\varphi\in\f}\left(\binom{|\varphi|-1+\beta(h_i)}{|\varphi|}\m(\varphi)\left(x_{\beta,k}\otimes a\pi(\varphi)\right)\right)^{(\psi(\varphi))}\\
&+&p_i(\chi)\left(x_{\beta,k}\otimes a\right)^{(r)}
\end{eqnarray*}
and by \eqref{pixr} we have
\begin{eqnarray*}
&&\hskip-.4in p_i(\chi)\left(x_{\beta,k}\otimes a\right)^{(r)}\\
&=&\sum_{\psi\in\cs_r(\chi)-\{r\chi_0\}}\prod_{\varphi\in\f}\left(\binom{|\varphi|-1-\beta(h_i)}{|\varphi|}\m(\varphi)\left(x_{\beta,k}\otimes a\pi(\varphi)\right)\right)^{(\psi(\varphi))}p_i\left(\chi-\sum_{\phi\in\f}\psi(\phi)\phi\right)\\
&+&\left(x_{\beta,k}\otimes a\right)^{(r)}p_i(\chi)
\end{eqnarray*}
In one of the cases the factors are in the order specified  by $(\preccurlyeq,R\cup I)$ and either the sums on the right-hand sides are zero or by Proposition \ref{degp} the sums on the right-hand sides have degree
$$\left|\chi-\sum_{\phi\in\f}\psi(\phi)\phi\right|+|\psi|=|\chi|+r-\sum_{\phi\in\f}\psi(\phi)|\phi|<|\chi|+r$$
since $\psi\neq r\chi_0$.

For $(3)$ we use \eqref{xbmarxgnbs}, \eqref{xa1rxtqs}, and \eqref{xaxb}. Using \eqref{xaxb} we have
\begin{eqnarray*}
\left(x_{\alpha,1}\otimes a\right)^{(r)}\left(x_{\zeta,1}\otimes b\right)^{(s)}
&=&\sum_{\substack{\psi\in\f\left(\N^2\right)-\{0\}\\\sum j\psi(j,k)\leq r\\\sum k\psi(j,k)\leq s}}\varepsilon_\psi\left(x_{\zeta,1}\otimes b\right)^{\left(s-\sum k\psi(j,k)\right)}\prod_{(j,k)\in\supp\psi}\left(x_{j\alpha+k\zeta,1}\otimes a^jb^k\right)^{(\psi(j,k))}
\nonumber\\
&\times&\left(x_{\alpha,1}\otimes a\right)^{\left(r-\sum j\psi(j,k)\right)}+\left(x_{\zeta,1}\otimes b\right)^{(s)}\left(x_{\alpha,1}\otimes a\right)^{(r)}
\end{eqnarray*}
Either the sum on the right-hand side is zero or its terms have degree
$$r+s-|\psi|-\sum_{(j,k)\in\supp\psi}\left(j\psi(j,k)+k\psi(j,k)\right)<r+s$$

For (4) by \eqref{xgammapi} we have
\begin{eqnarray*}
\left(x_{\zeta,n}\otimes a\right)p_i(\chi)
&=&\sum_{\psi\in\f(\chi)-\{0\}}\binom{|\psi|-1+\zeta(h_i)}{|\psi|}\m(\psi)p_i(\chi-\psi)\left(x_{\zeta,n}\otimes a\pi(\psi)\right)+p_i(\chi)\left(x_{\zeta,n}\otimes a\right)
\end{eqnarray*}
and by \eqref{pixgamma} we have
\begin{eqnarray*}
p_i(\chi)\left(x_{\zeta,n}\otimes a\right)
&=&\sum_{\psi\in\f(\chi)-\{0\}}\binom{|\psi|-1-\zeta(h_i)}{|\psi|}\m(\psi)\left(x_{\zeta,n}\otimes a\pi(\psi)\right)p_i(\chi-\psi)+\left(x_{\zeta,n}\otimes a\right)p_i(\chi)
\end{eqnarray*}
In one of the cases the factors are in the order specified  by $(\preccurlyeq,R\cup I)$ and either the sums on the right-hand sides are zero or by Proposition \ref{degp} the sums on the right-hand sides have degree $|\chi|-|\psi|+1<|\chi|+1$.
For (5) and (6)  just apply \eqref{xalqrxgamm}, \eqref{xbeta1rxgamm}, \eqref{2gam}, \eqref{xdel1x-delk} and \eqref{xgammxzetn} as needed.
\end{proof}
\end{lem}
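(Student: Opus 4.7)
The plan is to handle each of the six claims by applying the appropriate straightening identity from Propositions \ref{strteven} and \ref{strtodd} to write the product $XY$ as the ``reversed'' product $YX$ (or $-YX$ in the odd case) plus a sum of extra terms. The commutator (or anticommutator) will then equal those extra terms, so it suffices to show that each of them has degree strictly below the asserted bound. The degree of any generator or divided power can be read off directly: $(x_{\eta,u}\otimes a)^{(r)}$ has degree $r$, $(x_{\zeta,n}\otimes a)$ has degree $1$, and by Proposition \ref{degp}\eqref{degpalpha}, $p_i(\chi)$ has degree $|\chi|$. Throughout, the recipe is to isolate the unique ``trivial'' index in each sum (the one giving exactly the reversed product) and to verify that every other index drops the degree by at least one.

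For (1), I apply \eqref{x+x-}. The term with $(j,k,v)=(0,0,0)$ is precisely $(x_{-\alpha,1}\otimes b)^{(s)}(x_{\alpha,1}\otimes a)^{(r)}$, since $D_{0,t}^{\pm\alpha,1}(d,c)=(x_{\pm\alpha,1}\otimes c)^{(t)}$ and $p_\alpha(0)=1$; this matches the reversed product and cancels in the commutator. A general term $D_{j,s-j-k-v}^{-\alpha,1}(ab,b)\,p_\alpha(k\chi_{ab})\,D_{v,r-j-k-v}^{\alpha,1}(ab,a)$ has degree $(s-j-k-v)+k+(r-j-k-v)=r+s-2j-k-2v$, which is $<r+s$ once $(j,k,v)\ne 0$. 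For (2), I apply \eqref{xrpi} (or \eqref{pixr}), whose $\psi=r\chi_0$ term gives the reversed product; for $\psi\ne r\chi_0$, at least one $\phi\ne 0$ occurs with $\psi(\phi)>0$, so the displayed term has degree $\bigl|\chi-\sum\psi(\phi)\phi\bigr|+|\psi|=|\chi|+r-\sum\psi(\phi)|\phi|<|\chi|+r$. Claim (4) is proved identically using \eqref{xgammapi}/\eqref{pixgamma}, with the $\psi=0$ term giving the reversed product and every other term dropping the degree by $|\psi|\ge 1$.

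For (3) the situation splits into three subcases parallel to the structure of Proposition \ref{strteven}. If $\beta+\gamma\in R$ and no higher multiple appears, apply \eqref{xbmarxgnbs}: the $j=0$ summand is the reversed product, and for $j\ge 1$ the remaining factors contribute degree $(s-j)+j+(r-j)=r+s-j<r+s$. If in addition $2\beta+\gamma\in R$ (the Cartan-type peculiarity), use \eqref{xa1rxtqs}: the leading $(j_1,j_2)=(0,0)$ term is the reversed product, and a general term has degree $(s-j_1-j_2)+j_2+j_1+(r-j_1-2j_2)=r+s-j_1-2j_2<r+s$ once $(j_1,j_2)\ne 0$. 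The most delicate subcase is the full form \eqref{xaxb}, but here too the trivial $\psi=0$ index recovers the reversed product, while every nonzero $\psi$ contributes a degree $r+s-|\psi|-\sum_{(j,k)\in\supp\psi}(j+k)\psi(j,k)$ which is strictly less than $r+s$. Cases (5) and (6) are more straightforward: (5) is a direct reading of \eqref{xalqrxgamm} and \eqref{xbeta1rxgamm}, where the correction terms contain $(x_{\beta,1}\otimes a)^{(r-1)}$ or $(x_{\beta,1}\otimes a)^{(r-2)}$ of degree at most $r-1$ times a single odd generator of degree $1$; and (6) is immediate from \eqref{2gam}, \eqref{xdel1x-delk}, and \eqref{xgammxzetn}, where every correction is either zero, a scalar multiple of a generator of $\bu_\Z(\fg\otimes A)$, or $h_{\sigma_\vartheta(k)}\otimes ab$, hence of degree $1<2$.

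The main technical obstacle I anticipate is bookkeeping in subcase (3) under \eqref{xaxb}, where the sum ranges over all multi-indices $\psi\in\f(\N^2)$ and where each factor $(x_{j\alpha+k\zeta,1}\otimes a^jb^k)^{(\psi(j,k))}$ contributes nontrivially both to the degree count and to the constraint $\sum j\psi(j,k)\le r$, $\sum k\psi(j,k)\le s$. The key identity needed is that the combined degree $(s-\sum k\psi(j,k))+\sum\psi(j,k)+(r-\sum j\psi(j,k))=r+s-\sum(j+k-1)\psi(j,k)$ is strictly less than $r+s$ whenever $\psi\ne 0$, which holds because $(j,k)\in\N^2$ forces $j+k\ge 2$. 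Once this estimate is established, all six claims follow by a uniform appeal to the straightening identities, and in particular the commutators lie in the $\Z$-span of (not necessarily ordered) monomials in the generators of $\bu_\Z(\fg\otimes A)$, as needed for the inductive step in the proof of Theorem \ref{thm}.
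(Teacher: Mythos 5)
Your proposal is correct and follows essentially the same route as the paper: apply the straightening identities of Propositions \ref{strteven} and \ref{strtodd}, peel off the unique index giving the reversed product, and bound the degree of every remaining term. Your degree count for \eqref{xaxb}, namely $r+s-\sum(j+k-1)\psi(j,k)<r+s$ using $j+k\ge 2$, is in fact the cleaner version of the estimate the paper states, and your explicit treatment of (3), (5), (6) only fills in details the paper leaves to the reader.
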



\noindent\emph{Proof of Theorem \ref{thm}.} Fix an order $(\preccurlyeq,R\cup I)$. The Poincar\'{e}-Birkhoff-Witt Theorem for Lie superalgebras and Remark \ref{hbasis} imply that $\B$ is a $\C$--linearly independent set. Hence $\B$ is a $\bbz$--linearly independent set.

It suffices to show that the integer span of $\B$ is $\bu_\Z(\fg\otimes A)$. We will prove this by induction on the degree of monomials in $\bu_\Z(\fg\otimes A)$  (using the definition of degree in Section \ref{defdeg}). Since $p_i(\chi_a)=-(h_i\otimes a)$ any degree one monomial is in the integer span of $\B$. Let $m$ be any monomial in $\bu_\Z(\fg\otimes A)$. Recall that a monomial is any finite product of elements of the generating set of $\bu_\Z(\fg\otimes A)$.
If $m\in\B$ then we are done. If not then either the factors of $m$ are not in the order specified by $\preccurlyeq$, or $m$ has products of factors with the following forms
\begin{eqnarray}
\left(x_{\a,k}\otimes b\right)^{(r)}\left(x_{\a,k}\otimes b\right)^{(s)}&,&\a\in R_{\0},\ k\in[\mu(\a)],\ b\in\bb,\ r,s\in\Z_{\geq0}\label{facxbxb}\\
p_i(\chi)p_i(\psi)&,&i\in I,\ \chi,\psi\in\f(\bb)\label{facpipi}\\
\left(x_{\gamma,m}\otimes c\right)^{j}&,&\gamma\in R_1,\ m\in[\mu(\gamma)],\ c\in\bb,\ j\in\Z_{\geq0}\label{facxgam}.
\end{eqnarray}
If the factors of $m$ are not in the order given by $\preccurlyeq$ then we can order the factors of $m$ according to $\preccurlyeq$ using the straightening identities in Propositions \ref{strteven} and \ref{strtodd}. Lemma \ref{brack} implies that these rearrangements will only produce integer linear combinations of monomials with lower degree. These lower degree monomials are in the integer span of $\B$ by the induction hypothesis.

If (possibly after rearranging factors as above) $m$ contains products of factors as in \eqref{facxbxb} we apply \eqref{xaxa} to combine these pairs of factors into single factors with integral coefficients. If $m$ contains products of factors as in \eqref{facpipi} we apply Proposition \ref{degp}(4) to obtain an integer linear combination of single factors. Crucially all of these factors other than the leading term have lower degree and hence any monomial containing them is in the integer span of $\B$ by the induction hypothesis.  If $m$ contains powers of factors as in \eqref{facxgam} we apply \eqref{2gam} to get
$$\left(x_{\gamma,m}\otimes c\right)^{j}=\left(x_{\gamma,m}\otimes c\right)^{2k+\varepsilon}=z\left(x_{2\gamma,j^\prime}\otimes c^2\right)^{k}\left(x_\gamma\otimes c\right)^{\varepsilon}=zk!\left(x_{2\gamma,j^\prime}\otimes c^2\right)^{(k)}\left(x_\gamma\otimes c\right)^{\varepsilon}$$
where $z\in\{\pm1\}$, $k\in\Z_{\geq0}$, $\varepsilon\in\{0,1\}$. This monomial has lower degree and hence is in the integer span of $\B$ by the induction hypothesis.

We may need to repeat one or more of these steps but in the end we see that $m\in\Z$-$\span\ \B$. Thus the integer span of $\B$ is $\bu_\Z(\fg\otimes A)$ and so $\B$ is an integral basis for $\bu_\Z(\fg\otimes A)$.

If for some $\gamma\in R$, $2\gamma\in R$ we need a total order $(\precsim,\bb)$ (see Remark \ref{bborder}(1)). Using the identity \eqref{2gam} with we can reorder the products of factors of the form $\left(x_{\gamma,m}\otimes a\right)\left(x_{\gamma,m}\otimes b\right)$ as required by $\precsim$. With each reordering new monomials will be created, however, by Lemma \ref{brack}(6), they will have lower degree and hence will be in the integer span of $\B$ by the induction hypothesis. \hskip5in $\square$

\subsection{}

Let  $\bu_{\Z}^\pm(\fg \otimes A)$ denote the $\Z$-subalgebras of $\bu_{\Z}(\fg \otimes A)$ generated, respectively, by
$$\{(x_{\alpha}\otimes a)^{(r)},x_\gamma\otimes c:\alpha\in R_{\0}^\pm,\ \gamma\in R_{\1}^\pm,\ a,c\in\bb,\ r\in\Z_{\geq 0}\}.$$
Let $\bu_{\Z}^{0}(\fg \otimes A)$  denote the $\Z$-subalgebra of $\bu_{\Z}(\fg\otimes A)$ generated by
$$\{p_i(\chi):\chi\in\mathcal{F}(\bb),i\in I\}.$$

Then as a corollary to Theorem \ref{thm} we obtain the following triangular decomposition of $\bu_\Z(\fg\otimes A)$.

\begin{cor}
$$\bu_{\Z}(\fg \otimes A)=\bu_{\Z}^{-}(\fg \otimes A)\bu_{\Z}^{0}(\fg \otimes A)\bu_{\Z}^{+}(\fg \otimes A)$$
\end{cor}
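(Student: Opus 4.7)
The plan is to deduce this directly from Theorem \ref{thm} by a careful choice of the total order $(\preccurlyeq,R\cup I)$. One inclusion is automatic: $\bu_{\Z}^{-}(\fg\otimes A)\bu_{\Z}^{0}(\fg\otimes A)\bu_{\Z}^{+}(\fg\otimes A)\subseteq\bu_\Z(\fg\otimes A)$ since each factor is by construction a $\Z$-subalgebra of $\bu_\Z(\fg\otimes A)$. The content of the corollary is therefore the reverse inclusion.

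For the reverse inclusion, first I would fix a total order $\preccurlyeq$ on $R\cup I$ with the property that every element of $R^{-}=R_{\0}^{-}\cup R_{\1}^{-}$ precedes every element of $I$, which in turn precedes every element of $R^{+}=R_{\0}^{+}\cup R_{\1}^{+}$. Such an order obviously exists; any total refinement of this partition works. In the situation of Remarks \ref{bborder}(1), where some $\gamma\in R$ satisfies $2\gamma\in R$, I would additionally fix an arbitrary total order $\precsim$ on $\bb$ — this only affects how repeated factors attached to the same root are ordered and is internal to each of $\bu_\Z^{\pm}(\fg\otimes A)$.

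With this data, Theorem \ref{thm} produces an integral basis $\B$ of $\bu_\Z(\fg\otimes A)$ in which every basis element is written as a product of factors taken in the order $(\preccurlyeq,R\cup I)$ (and then in the order $\precsim$). By the choice of $\preccurlyeq$, any such product factors as $b=b^{-}b^{0}b^{+}$, where $b^{-}$ collects the factors $(x_{\alpha,k}\otimes a)^{(r)}$ and $(x_{\gamma,n}\otimes c)$ with $\alpha\in R_{\0}^{-}$ and $\gamma\in R_{\1}^{-}$, where $b^{0}$ collects the factors $p_i(\psi_i)$ with $i\in I$, and where $b^{+}$ collects the remaining factors, which are indexed by $R_{\0}^{+}$ and $R_{\1}^{+}$. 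By the very definition of the three subalgebras we have $b^{\pm}\in\bu_\Z^{\pm}(\fg\otimes A)$ and $b^{0}\in\bu_\Z^{0}(\fg\otimes A)$. Consequently every element of $\B$ lies in the product $\bu_\Z^{-}(\fg\otimes A)\bu_\Z^{0}(\fg\otimes A)\bu_\Z^{+}(\fg\otimes A)$.

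Since $\B$ is a $\Z$-basis of $\bu_\Z(\fg\otimes A)$, every element of $\bu_\Z(\fg\otimes A)$ is a $\Z$-linear combination of elements of $\B$ and hence lies in $\bu_\Z^{-}(\fg\otimes A)\bu_\Z^{0}(\fg\otimes A)\bu_\Z^{+}(\fg\otimes A)$, completing the proof. The only potential obstacle is the verification that the factors of $b$ genuinely lie in the intended subalgebras, but this is immediate from the definitions of $\bu_\Z^\pm$ and $\bu_\Z^{0}$ together with the fact that our order $\preccurlyeq$ was engineered to segregate negative roots, Cartan indices, and positive roots into three consecutive blocks. The main theorem is thus doing all of the real work, and the corollary amounts to choosing the order $\preccurlyeq$ carefully.
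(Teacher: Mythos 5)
Your proof is correct and takes essentially the same route as the paper: choose the total order so that $R^{-}\preccurlyeq I\preccurlyeq R^{+}$ and invoke Theorem \ref{thm}, so each basis element splits into a negative, Cartan, and positive block lying in the respective subalgebras. The extra details you supply (the trivial inclusion and the handling of $\precsim$ on $\bb$) are fine but not needed beyond what the paper's one-line argument already gives.
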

\begin{proof}
In Theorem \ref{thm} choose $(\preccurlyeq,R)$ so that $R^-\preccurlyeq I\preccurlyeq R^+$. Then by Theorem \ref{thm} we can write every element of $\bu_{\Z}(\fg \otimes A)$ as a $\Z$-linear combination of elements of $\bu_{\Z}^{-}(\fg\otimes A)\bu_{\Z}^{0}(\fg\otimes A)\bu_\Z^{+}(\fg\otimes A)$.
\end{proof}

Let $\left(\preccurlyeq_\pm,R^\pm\right)$ be any total orders. Define $\B^\pm$ to be the sets of all products (without repetitions) of elements of the set
$$\left\{\left(x_\alpha\otimes b\right)^{(r_\alpha)},x_\gamma\otimes c\ |\ \alpha\in R_{\0}^\pm,\ \gamma\in R_{\1}^\pm,\ b,c\in\bb,\ r_\alpha\in\Z_{\geq0}\right\}$$
taken in the orders given by $\preccurlyeq_\pm$. Given a total order $(\preccurlyeq,I)$ define
$$\B^0:=\left\{\prod_{i\in I}p_i(\varphi_i):\varphi_1,\ldots,\varphi_l\in\f(\bb)\right\}$$
with the products taken in the order given by $\preccurlyeq$.

\begin{prop}
Let  $\mathcal{B}^\pm$ and $\mathcal{B}^0$ be as above. Then
\begin{itemize}
\item [(1)] $\mathcal{B}^\pm$ are $\Z$ bases for $\bu_{\Z}^{\pm}(\fg \otimes A)$ respectively.
\item [(2)] $\mathcal{B}^{0}$ is a $\Z$ basis for $\bu_{\Z}^{0}(\fg \otimes A)$.
\end{itemize}
\end{prop}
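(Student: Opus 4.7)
The plan is to derive both statements from Theorem \ref{thm} together with the straightening identities, noting that the relevant subalgebras are closed under the rearrangements used in the proof of the main theorem. Linear independence of $\mathcal{B}^\pm$ and $\mathcal{B}^0$ is essentially free: by choosing a total order $(\preccurlyeq,R\cup I)$ that restricts to $\preccurlyeq_\pm$ on $R^\pm$ and to the given order on $I$, the sets $\mathcal{B}^\pm$ and $\mathcal{B}^0$ embed as subsets of the basis $\mathcal{B}$ from Theorem \ref{thm}, which is $\C$-linearly independent by PBW plus Remark \ref{hbasis}. Hence they are $\Z$-linearly independent.

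For the spanning claim for $\mathcal{B}^\pm$, I would repeat the induction-on-degree argument used in the proof of Theorem \ref{thm}, but applied to monomials in the generators of $\bu_\Z^\pm(\fg\otimes A)$ only. The key observation is that every straightening identity in Proposition \ref{strteven} and Proposition \ref{strtodd} that is invoked among generators with roots in $R_\0^+\cup R_\1^+$ (respectively $R_\0^-\cup R_\1^-$) produces only new factors of the same kind: if $\alpha,\beta\in R^+$ and $\alpha+\beta\in R$ then $\alpha+\beta\in R^+$, and similarly $2\alpha+\beta\in R^+$ whenever it is a root; none of the identities among positive-root generators introduces a $p_i(\chi)$ factor (equation \eqref{x+x-} is the only one that would, and it requires both $x_{\alpha,1}$ and $x_{-\alpha,1}$). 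Consequently the reduction loop from the proof of Theorem \ref{thm}, driven by Lemma \ref{brack}, closes inside $\bu_\Z^+(\fg\otimes A)$ (respectively $\bu_\Z^-(\fg\otimes A)$), and every monomial in the generators is rewritten as a $\Z$-linear combination of elements of $\mathcal{B}^\pm$.

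For $\mathcal{B}^0$, the argument is simpler: by \eqref{pipj} the factors $p_i(\chi)$ and $p_j(\varphi)$ already commute for all $i,j,\chi,\varphi$, so any monomial in the generators of $\bu_\Z^0(\fg\otimes A)$ can be reordered according to $(\preccurlyeq,I)$ without producing lower-degree remainders. Then within each block indexed by a fixed $i\in I$, Proposition \ref{degp}(4) allows us to combine any product $p_i(\chi)p_i(\varphi)$ into a $\Z$-combination of $p_i(\chi+\varphi)$ plus a term of strictly smaller degree, and we close the induction on degree exactly as before.

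The main obstacle I would watch for is the bookkeeping in the first part: one must be confident that no straightening identity applied inside $\bu_\Z^+$ (or $\bu_\Z^-$) ever generates a $p_i(\chi)$ term or a generator indexed by a root of the opposite sign. This is a finite check against the list in Propositions \ref{strteven} and \ref{strtodd}, relying on the elementary fact that $R^+$ is closed under those root-addition operations that actually occur. Once that is verified, the rest is a direct specialization of the degree induction already carried out for Theorem \ref{thm}.
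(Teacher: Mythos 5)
Your proposal is correct and follows essentially the same route as the paper: spanning by induction on degree using the straightening identities (the parts of Lemma \ref{brack} and identity \eqref{xaxa} for $\mathcal{B}^\pm$, and Proposition \ref{degp}(4) for $\mathcal{B}^0$), with linear independence inherited from Theorem \ref{thm}. Your explicit check that the rewriting never leaves $\bu_\Z^\pm(\fg\otimes A)$ (no $p_i(\chi)$ factors and no opposite-sign roots appear) is exactly the point the paper leaves implicit when it restricts to Lemma \ref{brack}(3),(5),(6).
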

\begin{proof}
The proof of (1) is similar to that of Theorem \ref{thm}. In this cas we proceed by induction on the degree of a monomial in $\bu_{\Z}^{\pm}(\fg \otimes A)$ using Lemma \ref{brack}(3), (5) and (6) and \eqref{xaxa}. For (2) it suffices to show that any product of elements of the set $\{p_i(\chi): \chi \in \mathcal{F}(B), i \in I\}$ is in the $\Z-$span of $\bu_{\Z}^{0}(\fg \otimes A)$. This can be shown by induction on the degree of such a product. For the inductive step we apply Proposition \ref{degp}(4).
\end{proof}

\end{document}